\newtheorem{thm}{Theorem}[section]
\newtheorem{cor}[thm]{Corollary}
\newtheorem{lem}[thm]{Lemma}
\newtheorem{prop}[thm]{Proposition}
\newtheorem{example}[thm]{Example}
\newtheorem{remarks}[thm]{Remarks}
\newtheorem{defn}[thm]{Definition}
\numberwithin{equation}{section}
\title{\bf {$L^1$-Uniqueness of the
 Fokker-Planck equation  on a Riemannian manifold
  }}
\author{Bin Qian\thanks{School of Mathematics and Statistics, Changshu Institute of Technology    215500, Changshu, China,  China Email: binqiancn@yahoo.com.cn}
 \ \ \ \ \ \  Liming Wu\thanks{Laboratoire de Math\'ematiques Appliqu\'ees,
CNRS-UMR 6620,  Universit\'e Blaise Pascal,  63177 AUBIERE, France.
Email : Li-Ming.Wu@ math.univ-bpclermont.fr}}
\date{}
\newcommand{\dd}{\mathbb{D}}
\newcommand{\ee}{\mathbb{E}}
\newcommand{\nn}{\mathbb{N}}
\newcommand{\rr}{\mathbb{R}}
\newcommand{\pp}{\mathbb{P}}
\def\CC{\mathcal C}
\def\DD{\mathcal D}
\def\FF{\mathcal F}
\def\II{\mathcal I}
\def\LL{\mathcal L}
\def\vep{\varepsilon}
\def\<{\langle}
\def\>{\rangle}
\def\d"{^{\prime\prime}}
\def\bequ{\begin{equation}}
\def\nequ{\end{equation}}
\def\bdef{\begin{defn}}
\def\ndef{\end{defn}}
\def\bthm{\begin{thm}}
\def\nthm{\end{thm}}
\def\bprop{\begin{prop}}
\def\nprop{\end{prop}}
\def\brmk{\begin{remarks}}
\def\nrmk{\end{remarks}}
\def\bexam{\begin{example}}
\def\nexam{\end{example}}
\def\blem{\begin{lem}}
\def\nlem{\end{lem}}
\def\bcor{\begin{cor}}
\def\ncor{\end{cor}}
\def\bprf{\begin{proof}}
\def\nprf{\end{proof}}
\def\dps{\displaystyle}
\def\bdes{\begin{description}}
\def\ndes{\end{description}}
\begin{document}
\maketitle
\begin{abstract}
In this paper, we obtain a necessary and sufficient condition for
$L^{\infty}$-uniqueness of Sturm-Liouville operator $a(x)\frac
{d^2}{dx^2} + b(x) \frac d{dx} -V$ on an open interval of $\rr$,
which is equivalent to the $L^1$-uniqueness of the associated
Fokker-Planck equation. For a general elliptic operator
$\LL^V:=\Delta +b \cdot\nabla -V$ on a Riemannian manifold, we
obtain sharp sufficient conditions for the $L^1$-uniqueness of the
Fokker-Planck equation associated with $\LL^V$, via comparison with
a one-dimensional Sturm-Liouville operator. Furthermore the
$L^1$-Liouville property is derived as a direct consequence of the
$L^\infty$-uniqueness of $\LL^V$.

\end{abstract}
\vskip 20pt\noindent {\bf Key Words:}  Fokker-Planck equation, Liouville property, Sturm-Liouville operator,  $L^{\infty}$-uniqueness of
operator.

\vskip 20pt\noindent {\bf AMS 2010 Subject classification : 34B24 53C21}

\section{Introduction}

On a connected non-compact Riemannian manifold $M$ without boundary,
consider the heat diffusion governed by $\LL^Vf:=\Delta f + b\cdot
\nabla f -V f$ where $f\in C_0^\infty(M)$ (the space of all real
infinitely differentiable functions with compact support), where
$\Delta, \nabla$ are respectively the Laplace-Beltrami operator and
the gradient on $M$. Here the vector field $b$ is locally
Lipschitzian and represents the macroscopic velocity  of the heat
diffusion, $V: M\to \rr^+$ is a locally bounded potential killing
the heat.

Let $u(t,x)dx$ be the heat distribution at time $t$. It satisfies
the well known Fokker-Planck equation in the distribution sense

\bequ \label{FP}
\partial_t u = (\LL^V)^* u(t,x)= \Delta u - div(ub)  -
V u, \ u(0,\cdot)\ \text{given}. \nequ

A $L^1(M,dx)$-solution to (\ref{FP}) means that $t\to
u(t)=u(t,\cdot)$ is continuous from $\rr^+$ to $L^1(M,dx)$ and
$$
\<u(t)-u(0), f\> = \int_0^t \<u(s), \LL^V f\> ds, \ \forall t\ge0, \
f\in C_0^\infty(M),
$$
where $\<f,g\>=\int_Mf(x)g(x)dx$.

 The study on this subject has a long history when $\LL^V
=\Delta$:

\bdes

\item{a)} the subject was opened by S. T. Yau \cite{Yau1, Yau2}.
Once if $M$ is complete and $1<p<+\infty$, every nonnegative
subharmonic functions in  $L^p(M,dx)$ are constant (\cite{Yau1}),
and the $L^p$-uniqueness of the above Fokker-Planck equation holds
(due to Strichwarz \cite{Str1}). In \cite{WZ2} it is proved that the
$L^p$-Liouville property for nonnegative subharmonic functions
implies the $L^p$-uniqueness of the above Fokker-Planck equation for
general $\LL^V$ instead of $\Delta$.

\item{b)} For the $L^\infty$-Liouville property, Yau \cite{Yau1}
proved that every bounded harmonic function is constant if $M$ has
nonnegative Ricci curvature. The last curvature condition is shown
to be sharp, since there are infinitely many bounded harmonic
functions on a simply connected manifold with sectional curvature
identically $-1$. The final result in this opposite direction was
obtained by Sullivan \cite{Su} and Anderson \cite{An}: on a complete
$M$ with (strongly) negative sectional  curvature they identified
the Martin boundary of $M$ as the sphere at infinity $S(\infty)$.
See Anderson-Schoen \cite{AS}, Schoen-Yau \cite{SY}  for development
of this subject.

\item{c)} For the $L^\infty$-uniqueness of (\ref{FP}) with
$\LL^V=\Delta$, Davies \cite{Da} proved that it is equivalent to the
stochastic completeness of $M$ (i.e., the Brownian motion on $M$
does not explode). Grigor'yan \cite{Gri1} found sharp volume growth
condition for the stochastic completeness of $M$.

\item{d)} The question of $L^1$-uniqueness for  (\ref{FP}) is much more delicate.
Azencott \cite{Az} and P. Li and Schoen \cite{LS} found several
counter-examples for which the $L^1$-uniqueness of (\ref{FP}) fails.
P. Li \cite{Li} found the following sharp sufficient condition for
the $L^1$-uniqueness of (\ref{FP}) (with $\LL^V=\Delta$) on a
complete Riemannian manifold : \bequ \label{PLi} Ric_x \ge -C( 1+
d(x,o)^2) \nequ where $Ric_x$ is the Ricci curvature at $x$, $C>0$
is some constant, $o$ is some fixed point and $d(x,o)$ is the
Riemannian distance. Under that condition he proved that every
nonnegative $L^1(M,dx)$-subharmonic function is constant.

\ndes

Recently the second named author and Y. P. Zhang \cite{WZ2}
introduced the $L^\infty$-uniqueness of $\LL^V$ and prove that it is
equivalent to the $L^1$-uniqueness of (\ref{FP}) and also to the
$L^1$-uniqueness of the resolvent equation:

\bequ \label{resolvent} \text{if }\ u\in L^1(M,dx)\ \text{ verifies
}\ [(\LL^V)^* -1] u=0, \ \text{ then }\ u=0. \nequ

Furthermore when $M=\rr^d$ and $V=0$, necessary and sufficient
conditions are found for the $L^1$-uniqueness of (\ref{FP}) in the
one-dimensional case ($d=1$), and sharp sufficient conditions are
obtained in the multi-dimensional case. Our main purpose of this
work is to generalize the results of \cite{WZ2}. However this is not
just a generalization, indeed the new difficulty is comparable to
that in the classical passage from the Laplacian $\Delta$ to the
Schr\"odinger operator $-\Delta +V$.

The $L^2$-uniqueness for (\ref{FP}) might seem to be the most
important and natural. This is true from the point of view of
quantum mechanics when $b=0$ (in such case it is also equivalent to
the $L^2$-uniqueness of the associated Schr\"odinger equation or the
essential self-adjointness of $-\Delta +V$). But from the point of
view of heat diffusion, the $L^1$-uniqueness is physically
meaningful and it is then  important: indeed in the heat diffusion
interpretation, $u(t,x)\ge 0$ is the energy ($=$ heat) density and
the $L^1$-norm $\int_M |u(t,x)| dx$ is the total energy in the
system at time $t$; the quantities $\int u^2(t,x) dx$ or $\int
|\nabla u(t,x)|^2 dx$, though called energy in mathematical
language, are not energy in the physics of heat diffusion.

Let us explain where comes the non-uniqueness of solutions to the
Fokker-Planck equation (\ref{FP}) from two points of view.

{\bf  1) Mathematically.} When $M$ is not complete, one can impose
 different boundary conditions on the ``boundary" $\partial M:= \bar M \backslash
 M$ (which may vary and depend on different topologies) to obtain different solutions, such as Dirichlet boundary and
 Neumann boundary etc. Even if $M$ is complete, integrability or
 growth conditions will be required to assure the uniqueness of
 solution.

 {\bf 2) Physically.} The non-uniqueness comes from the
 interchange of heat between $M$ and its ``boundary". For example
 the $L^\infty$-uniqueness of (\ref{FP}) with $\LL^V=\Delta$ is
 equivalent to the non-explosion of the Brownian Motion on $M$ (i.e. $M$
  is stochastically complete) by \cite{Da},
 which means that the heat from the interior of $M$ can not reach
 the boundary $\partial$ (the one-point compactification of $M$).
 This intuitive idea is realized on a connected open domain $M$ of
 $\rr^d$ for $\Delta -V$ and for  the Nelson's diffusions
 $\Delta - \nabla \phi\cdot \nabla$ by the second named author in
 \cite{Wu98} and \cite{Wu99}.

 There is another way of interchange of heat between $M$ and its
 ``boundary": the heat at the boundary can enter into the interior
 of $M$. Indeed for the one-dimensional Sturm-Liouville operator
 without killing potential (i.e., $V=0$) on an open interval $M$ of $\rr$, the second named author
 with Y. Zhang \cite{WZ1, WZ2} proved that the $L^1$-uniqueness of
 the associated Fokker-Planck equation is equivalent to say that
 the boundary is {\it no entrance } boundary in the classification
 of Feller, which exactly means in the probabilistic interpretation
 that the heat  at the boundary can not enter into the interior
 of $M$. This is very intuitive: if the heat at the ``boundary" can
enter into the interior of $M$, new energy can be inserted from the
``boundary" into $M$ without being perceived by the local operator
$\LL^V$, and then destroys the $L^1$-uniqueness of (\ref{FP}).

The goal of this work is to realize the last physical intuition for
general $\LL^V$. All results in this work are inspired by
probabilistic (=physical) ideas, but for a larger audience all
crucial proofs will be analytic.

This paper is organized as follows. In the next section, we
introduce some preliminaries and present characterizations and
applications of the $L^\infty$-uniqueness of $\LL^V$ to the
$L^1$-Liouville property. Section 3 is devoted to the study of one
dimensional Sturm-Liouville operators $\LL^V=a(x)\frac
{d^2}{dx^2}+b(x)\frac d{dx} -V$. We shall furnish a necessary and
sufficient condition for  the $L^\infty$-uniqueness of $\LL^V$ by
means of a new notion of {\it no entrance boundary}. A comparison
principle is derived and several examples are presented. In Section
4, we establish a sharp sufficient condition for the
$L^\infty$-uniqueness of $\LL^V$ on Riemannian manifolds by means of
comparison with a one-dimensional model. Several examples are
presented.

\section{$L^\infty$-uniqueness of pre-generator and $L^1$-Liouville property}

Throughout this paper we assume that vector filed $b$ is locally
Lipschitzian and the killing potential $V$ is nonnegative and
locally bounded (measurable of course).

\subsection{Background on $L^\infty$-uniqueness of pre-generator}

Given the operator $\LL^V$ acting on $C_0^\infty(M)$, let
$(X_t)_{0\le t\le \sigma}$ be the (stochastic) diffusion generated
$\LL = \Delta + b\cdot\nabla$, defined on $(\Omega, \FF,
(\pp_x)_{x\in M})$, where $\sigma$ is the explosion time (see
Ikeda-Watanabe \cite{IW}). Then by Feynman-Kac formula,

\bequ \label{21a} P_t^V g(x) = \ee^x 1_{t<\sigma}g(X_t)
\exp\left(-\int_0^t V(X_s) ds\right) \nequ is one semigroup
generated by $\LL^V$, i.e., for all $t\ge0$,
$$
P_t^V f - f =\int_0^t P_s^V (\LL^V f) ds, \ \forall f\in
C_0^\infty(M).
$$
But $(P_t^V)$ is not strongly continuous on $L^\infty(M, dx)$ w.r.t.
the norm $\|\cdot\|_\infty$ (indeed Lotz's theorem says that  the
generator of every strongly continuous (or $C_0-$) semigroup of
operators on $(L^\infty, \|\cdot\|_\infty)$ is bounded). So it is
impossible to define the uniqueness of $C_0-$semigroup on $L^\infty$
generated by $\LL^V$, in the norm $\|\cdot\|_\infty$. That's why we
introduce in \cite{WZ2} the topology $\CC(L^\infty, L^1)$ on
$L^\infty$ of uniform convergence over compact subsets of $L^1$. It
is proved in \cite{WZ2} that a semigroup of bounded operators on
$L^\infty$ is strongly continuous on $L^\infty$ with respect to
$\CC(L^\infty, L^1)$ if and only if $(P_t)=(Q_t^*)$, where $(Q_t)$
is a $C_0$-semigroup on $L^1$ (w.r.t. the $L^1$-norm). Now the
$L^\infty$-uniqueness of $\LL^V$ can be defined as

\begin{defn}{\rm (\cite{WZ2})}
We call that $\LL^V$ is $L^\infty$-unique,  if the closure
$\overline{\LL^V}$ of $(\LL^V, C_0^\infty(M))$ is the generator of
$(P_t^V)$ on $(L^\infty, \CC(L^\infty, L^1))$, in the graph topology
induced by $\CC(L^\infty, L^1)$.
\end{defn}

 Let
\bequ \label{radius} \lambda_0:= \lim_{t\to\infty} \frac 1t \log
\|P_t^V1\|_\infty = \inf_{t>0} \frac 1t \log \sup_{x\in M}
P_t^V1(x),\nequ i.e., $e^{\lambda_0 t}$ is the spectral radius of
$P_t^V$ in $L^\infty(M,dx)$, which is always in the spectrum of
$P_t^V$ in $L^\infty(M,dx)$. Since $V\ge0$, we have always
$\lambda_0\le0$. Recall

\bthm \label{thm21} {\rm (\cite[a particular case of Theorem
2.1]{WZ2})} The following properties are equivalent:

\bdes

\item{(i)} $\LL^V$ is $L^\infty$-unique;

\item{(ii)} for some or equivalently for all $\lambda>\lambda_0$, if $ u\in L^1(M,dx)$ verifies
$[(\LL^V)^* -\lambda] u=0$, then $u=0$;

\item{(iii)} the Fokker-Planck equation (\ref{FP}) has a unique
$L^1(M,dx)$-solution;

\item{(iv)} $(P_t^V)$ given by (\ref{21a}) is the unique
$C_0$-semigroup on $(L^\infty, \CC(L^\infty, L^1))$ such that its
generator extends $\LL^V$. \ndes

\nthm

By the theory for elliptic partial differential equations (PDE),
$$P_t^Vf(x)= \int_M p_t^V(x,y) f(y)dy$$
and it is known that if  $0\le u(0)=u(0,\cdot)\in L^1(M,dx)$,
$$
u(t,y):= (P_t^V)^* u(0) (y) = \int_M u(0,x) p_t^V(x,y) dx
$$
is the minimal nonnegative solution to (\ref{FP}).

\subsection{$L^1$-Liouville property}

At first we should understand the meaning of harmonic functions
related with $\LL^V$. When $\LL^V=\Delta$, a harmonic function $h$
(i.e., $\Delta h=0$) is a solution independent of $t$  to (\ref{FP})
(i.e., the equilibrium distribution of heat). For $\LL=\Delta
+b\cdot \nabla$, the equilibrium distribution $h$ of heat satisfies
Kolmogorov's equation
$$
\LL^* h=\Delta h - div(hb)=0.
$$
However in presence of the killing potential $V\ge0$, usually
equilibrium distribution $h$ is zero. So some further interpretation
is required. Since $p_t^V(x,y)>0, \ dy-a.e.$ for every $x\in M$, the
dimension of \bequ\label{II} \II:=\{h\in L^1(M,dx); \ (P_t^V)^* h=
e^{\lambda_0 t}h, \forall t\ge0 \} \nequ is at most one
(Perron-Frobenius theorem), and if its dimension is one, then it is
generated by some strictly positive $h_0$ such that $\int_M h_0
dx=1$ (by the theory of positive operators \cite{MN91}).

\bdef A function $h\in L^1_{loc}(M,dx)$ (the space of real locally
$dx$-integrable functions on $M$) is said to be
$(\LL^V-\lambda)^*$-harmonic where $\lambda\in \rr$, if
$$
\<h, (\LL^V -\lambda ) f\>=0, \ \forall f\in C_0^\infty(M)
$$
(recall that $\<f,g\>=\int_M fgdx$). It is said to be
$(\LL^V-\lambda)^*$-subharmonic, if

$$
\<h, (\LL^V -\lambda ) f\>\ge0, \ \forall 0\le f\in C_0^\infty(M).
$$

 \ndef

We  now state our result about the $L^1$-Liouville property.

\bthm\label{thm22} Assume that $\LL^V$ defined on $C_0^\infty(M)$ is
$L^\infty$-unique. Let $\lambda\in\rr$. Then for $h\in L^1(M,dx)$,
it is $(\LL^V-\lambda)^*$-harmonic if and only if
$$
(P_t^V)^* h= e^{\lambda t} h, \ \forall t\ge0.
$$
In particular we have the following alternatives :  \bdes \item{(a)}
If $\lambda>\lambda_0$ or $\lambda=\lambda_0$ but $dim(\II)=0$, then
every $(\LL^V-\lambda)^*$-harmonic function $h$ in $L^1(M,dx)$ is
zero.

\item{(b)} If $\lambda=\lambda_0$ and $dim(\II)=1$, then every
 $(\LL^V-\lambda)^*$-harmonic function $h$ in $L^1(M,dx)$ is $c h_0$
where $h_0$ is the strictly positive element in $\II$ such that
$\int_M h_0 dx=1$ and $c$ is a constant.

\ndes \nthm

The results above without the $L^\infty$-uniqueness of $\LL^V$ are
in general false, see Li-Schoen's Example \ref{exa41}.

 \bprf The sufficient part is obvious by differentiating on $t=0$
 (and holds true even without the $L^\infty$-uniqueness of $\LL^V$). Let us prove the
 necessity. Consider the generator $\LL^V_{(\infty)}$ of $(P_t^V)$
in $L^\infty(M,dx)$. For every $f$ belonging the domain of
definition $\dd(\LL^V_{(\infty)})$, there is a nest $(f_i)$ in
$C_0^\infty(M)$ such that
$$
f_i\to f, \ \LL^V f_i\to \LL^V_{(\infty)}f
$$
in the topology $\CC(L^\infty, L^1)$ by the assumed
$L^{\infty}$-uniqueness. Thus we obtain for all $f\in
\dd(\LL^V_{(\infty)})$,
$$
\<h, (\LL^V_{(\infty)} - \lambda) f\>=0
$$
which implies (since $P_t^V f\in \dd(\LL^V_{(\infty)})$ for all
$t\ge0$)
$$
\frac d{dt}\<e^{-\lambda t} (P_t^V)^*h, f\>=\frac d{dt} \<h,
e^{-\lambda t} P_t^V f\>=\<h, (\LL^V_{(\infty)} - \lambda)
e^{-\lambda t} P_t^V f\>=0, \ \forall t\ge0
$$
where it follows that $\<e^{-\lambda t} (P_t^V)^*h, f\>=\<h,f\>$.
Since $\dd(\LL^V_{(\infty)})$ is dense in $L^\infty(M,dx)$ with
respect to  $\CC(L^\infty, L^1))$ (\cite{WZ2}), we  get $e^{-\lambda
t} (P_t^V)^*h=h$ for all $t\ge0$.

 When $\lambda>\lambda_0$, the Liouville property in (a) is
 equivalent to the $L^\infty$-uniqueness of $\LL^V$ (\cite[Theorem
 0.2 or Theorem 2.1]{WZ2}). If $\lambda=\lambda_0$, the last part of (a) and (b) follow easily
from the previous equivalence.
 \nprf

\bexam\label{exa21} {\rm $\LL^V=\Delta$.  Assume that $\Delta$ is
$L^\infty$-unique. By Theorem \ref{thm22}, we have

\bdes \item{(i)} If $M$ is not stochastically complete, then every
integrable harmonic function $h$ is zero. Indeed by Theorem
\ref{thm22} we have $P_t ^*h=h$ where $(P_t)$ is the Brownian motion
(or heat) semigroup. Then $P_t^*|h|\ge |h|$. Since  $P_t1<1$
everywhere on $M$, we get that if $h\ne 0$ in $L^1(M,dx)$,
$$
\<1, |h|\> > \<P_t 1, |h|\> =\<1, P_t^*|h|\> \ge \<1, |h|\>
$$
which is a contradiction.

\item{(ii)} If $M$ is stochastically complete and the volume of
$M$ is infinite, then $dim(\II)=0$ and consequently every integrable
harmonic function is zero.

Indeed, if in contrary $dim(\II)=1$, i.e., $\II$ is spanned by some
nonnegative non-zero function $h_0\in L^1(M,dx)$, since
$\lambda_0=0$ by the stochastic completeness of $M$, $h_0dx$ is an
invariant probability measure of the Brownian motion semigroup
$(P_t)$, which implies that the kernel $R_1:=\int_0^\infty e^{-t}
P_tdt$ is positively recurrent (\cite[Proposition 10.1.1]{MT93}).
But for such Markov kernel $R_1$, it has no other nonnegative
invariant measure than $ch_0 dx$ (\cite[Theorem 10.0.1]{MT93}) for
some constant $c>0$. However $dx$ is an invariant measure of $R_1$,
which is infinite. This contradiction yields that $dim(\II)=0$.

\item{(iii)} If $M$ is stochastically complete and the volume of
$M$ is finite, then $dim(\II)=1$ and $\II$ coincides with $\rr$ and
consequently every integrable harmonic function is constant. The
argument is as in (ii). See Example \ref{exa41} for a stochastically
complete and finite volume manifold for which $\Delta$ is not
$L^\infty$-unique and the $L^1$-Liouville property is violated.

\ndes

} \nexam

The argument in the example above leads to

\bcor\label{cor21} Let $\LL^V=\LL=\Delta +b \cdot\nabla$, i.e.,
$V=0$. Assume that $\LL$ is $L^\infty(M,dx)$-unique. Then

\bdes \item{(a)} If the diffusion $(X_t)_{0\le t<\sigma}$ generated
by $\LL$ is explosive, i.e., $\pp_x(\sigma<+\infty)>0$ for some (or
equivalently for all) $x\in M$, then every $\LL^*$-harmonic function
$h$ in $L^1(M,dx)$ is zero.

\item{(b)} If the diffusion $(X_t)_{0\le t<\sigma}$ generated by
$\LL$ is not explosive, i.e., $\pp_x(\sigma<+\infty)=0$ for all
$x\in M$, then either there is no non-zero $\LL^*$-harmonic and
integrable function, or there is one positive $dx$-integrable
$\LL^*$-harmonic function $h_0$ such that for every non-zero
$\LL^*$-harmonic function $h\in L^1_{loc}(M,dx)$, if $h\ge 0$ or
$h\in L^1(M,dx)$, then $h$ is a constant multiple of $h_0$. \ndes
\ncor

In summary if $\LL^V$ is $L^\infty$-unique, we have the
$L^1$-Liouville property stated in Theorem \ref{thm22} and Corollary
\ref{cor21}. Then the main task remained to us is to check the
$L^\infty$-uniqueness of $\LL^V$.

From now on in this paper, $L^{\infty}$ will be endowed with the
topology $\CC (L^{\infty}, L^1)$,  and the $L^\infty$-uniqueness of
operators and $C_0$-semigroups etc. on $L^{\infty}$ are always
w.r.t. $\CC (L^{\infty},  L^1)$.

\section{$L^\infty$-uniqueness of  Sturm-Liouville operator}

Consider the following Sturm-Liouville operator:
\begin{equation}\label{generator}
\LL^Vf(x) = a(x)f^{\prime\prime} + b(x)f^{\prime}-V(x)f,
\quad\forall f\in C_0^{\infty}(x_0,y_0)
\end{equation}
 $(-\infty\leq x_0 <y_0\leq
+\infty)$. Assume that the coefficients $a,\  b,\ V$ of $\LL^V$ in
(\ref{generator}) satisfy
\begin{equation}\label{ass1}
 \  a(x),\    b(x)\in L^{\infty}_{loc}(x_0, y_0)
\end{equation}

\begin{equation}\label{ass2}
  a(x)>0 \ dx{\rm-a.e.} \ ;\ \frac 1{a(x)},\  V(x) \in
  L^{\infty}_{loc}(x_0, y_0); \  V(x)\geq0;
\end{equation}
where $L^{\infty}_{loc}(x_0, y_0)$ (resp. $L^{1}_{loc}(x_0, y_0)$ )
denotes the space of real measurable functions which are essentially
bounded (resp. integrable) w.r.t. the Lebesgue measure $dx$ on any
compact sub-interval of $(x_0, y_0)$. Fix a point $c\in (x_0, y_0)$
and let
\begin{equation}\label{feller1}
s'(x)=\exp\left(-\int^x_c\frac{b(t)}{a(t)}dt\right), \  m'(x)
=\dps\frac{1}{a(x)}\exp\left(\int^x_c\frac{b(t)}{a(t)}dt\right).
\end{equation}
 Their primitives $s$ and $m$ are respectively the scale and
speed functions of Feller. Below $m$ will also denote the measure
$m'(x)dx$. It is easy to see that
$$
\<\LL^Vf,\   g\>_m=\<f,\   \LL^Vg\>_m,  \ \forall f, g\in
C^\infty_0(x_0, y_0)
$$
where $\dps \<f,\  g\>_m:=\dps\int^{y_0}_{x_0}f(x)g(x)m'(x)dx.$ For
$f\in C^\infty_0(x_0, y_0)$,  we can write $\LL^V$  in the following
form of Feller,
$$
\LL^Vf= \frac {d}{dm} \frac d{ds}f-Vf.
$$
Now regard $\LL^V$ as an operator on $L^p(m):=L^p((x_0,y_0),m)$,
$p\in [1,+\infty]$, with domain of definition  $C^\infty_0(x_0,
y_0)$. Recall that $ L^\infty(m)$ is endowed always with the
topology $\CC(L^\infty(m), L^1(m))$. Again let $(X_t)_{0\le
t<\sigma}$ be the diffusion in $(x_0,y_0)$ generated by $\LL$ with
the explosion time $\sigma$ (cf. \cite{IM}) and define $P_t^V$ by
the Feynman-Kac formula  as in (\ref{21a}). $P_t^V$ is
$m$-symmetric, and its generator $\LL^V_{(p)}$ in
$L^p(m)=L^p((x_0,y_0), m)$ extends $\LL^V$ (defined on
$C_0^\infty(x_0,y_0)$). The problem resides again in the uniqueness.

$\LL^V$ is said {\it $L^p(m)$-unique} ($1\le p\le +\infty$), if
 its closure in $L^p(m)$ coincides with $\LL^V_{(p)}$. That
 $L^p(m)$-uniqueness is equivalent to the uniqueness of
 solution $t\to u(t)$ (continuous from $\rr^+\to L^q(m)$) to the following integral version of
 Fokker Planck equation
$$
\<u(t)-u(0), \LL^V f\>_m =\int_0^t \<u(s), \LL^V f\>_m ds, \ \forall
f\in C_0^\infty(x_0,y_0),\ \forall t\ge0
$$
for every $u(0)\in L^q(m)$ given, where $q$ is the conjugate number
of $p\in [1,+\infty]$, i.e., $\dps q=\frac{p}{p-1}$. ({\it In other
words the $L^p$-uniqueness of $\LL^V$ is equivalent to the
$L^q$-uniqueness of the associated Fokker-Planck equation.}) It is
also equivalent to : for any $h\in L^q(m)$,

\bequ\label{Lq} (\<h, (\LL^V-1)f\>_m=0, \ \forall f\in
C_0^\infty(x_0,y_0))\implies h=0. \nequ See \cite{WZ2} for numerous
other characterizations.

The study of $L^2$-uniqueness of the Sturm-Liouville operators was
born with the {\it limit point--limit cycle} theory of Weil (see
\cite{RS}). In a series of pioneering works (here we mention only
\cite{fe1, fe2}) W. Feller investigated thoroughly  the different
sub-Markov generator-extensions of $\LL^V$.

The recent study is concentrated on the case where $V=0$. Wielens
\cite{[Wi]} obtained the characterization of $L^2$-uniqueness (or
equivalently the essential self-adjointness) of $\LL$. Furthermore,
Eberle \cite{[Eb]} and Djellout \cite{Dj97} have completely
characterized the $L^p$-uniqueness of $\LL$ for $1<p<\infty$. The
$L^1$-uniqueness, the $L^\infty$-uniqueness are characterized in
\cite{Wu99} and \cite{WZ2}, respectively.

In presence of the killing potential, the problem of uniqueness
becomes much more difficult, just because it is hard to obtain {a
priori} estimates about solutions of the second order ordinary
differential equation with a potential. This can be seen for an
example in the theory of Weil: $\Delta - c/x^2$ $(c>0)$ acting on
$C_0^\infty(0,+\infty)$ is $L^2((0,+\infty),dx)$-unique iff $c\ge
3/4$ (see Reed-Simon \cite{RS}). This simple example (but profound
characterization) excludes any easy integral test criteria such as
those in no killing case.

Our purpose is to find an explicit characterization of the
$L^{\infty}$-uniqueness of $(\LL^V, C^\infty_0(x_0, y_0))$.

\subsection{Main result}
The main result of this section is

\begin{thm}\label{thm1}
$(\LL^V, C^\infty_0(x_0, y_0))$ is unique in $L^{\infty}(m)$  iff
for some or equivalently all $\delta>0$
\begin{equation}\label{1.1}
\int_{c}^{y_0}\sum_{n\geq0}I_n^{V+\delta}(y)m'(y)dy=+\infty,
\end{equation}
\begin{equation}\label{1.2}
\int_{x_0}^{c}\sum_{n\geq0}J_n^{V+\delta}(y)m'(y)dy=+\infty,
\end{equation}
where for all $V\ge 0$,

$$
\aligned I_0^V(y)&=1,\ \ I_n^V(y)=\int_{c}^{y}
s'(r)dr\int_{c}^{r}m'(t)
V(t)I^V_{n-1}(t)dt, \ y\ge c;\\
J_0^V(y)&=1,\ \
J_n^V(y)=\int_{y}^{c}s'(r)dr\int_{r}^{c}m'(t)V(t)J^V_{n-1}(t)dt, \
y\le c.
\endaligned
$$

\end{thm}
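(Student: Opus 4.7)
My plan is to convert the $L^\infty(m)$-uniqueness of $\LL^V$ into a concrete statement about the $L^1(m)$-kernel of $\LL^V - \delta$ via Theorem~\ref{thm21}: since $\LL^V$ is $m$-symmetric, the characterization becomes that for some (equivalently every) $\delta>0$, the only $u\in L^1(m)$ with $(\LL^V-\delta)u=0$ distributionally is $u\equiv 0$. Standard 1D-ODE regularity under (\ref{ass1})--(\ref{ass2}) promotes any such $L^1_{\rm loc}$ distributional solution to a genuine solution of the second-order ODE, whose solution space is two-dimensional; so the task reduces to characterizing the intersection of that space with $L^1(m)$. Writing the ODE in Feller form $\frac{d}{dm}\frac{du}{ds}=Wu$ with $W:=V+\delta\geq\delta>0$, I pick the basis $\varphi,\psi$ determined by $\varphi(c)=1$, $(\varphi'/s')(c)=0$ and $\psi(c)=0$, $(\psi'/s')(c)=1$. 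Iterating the Volterra integral equation $\varphi(y)=1+\int_c^y s'(r)dr\int_c^r m'(t)W(t)\varphi(t)dt$ identifies $\varphi$ with $\sum_{n\geq 0}I_n^{V+\delta}$ on $[c,y_0)$ and symmetrically with $\sum_{n\geq 0}J_n^{V+\delta}$ on $(x_0,c]$, convergence being monotone on compacts. Hence $\varphi>0$ on $(x_0,y_0)$, and (\ref{1.1})--(\ref{1.2}) translate exactly to $\int_c^{y_0}\varphi\,dm=\int_{x_0}^c\varphi\,dm=+\infty$.

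The constancy of the Feller Wronskian $W_s(\varphi,\psi):=\varphi(\psi'/s')-\psi(\varphi'/s')\equiv 1$ yields $(\psi/\varphi)'(y)=s'(y)/\varphi(y)^2$, so $\psi/\varphi$ is strictly increasing with limits $L_+:=\int_c^{y_0}s'/\varphi^2\,dr$ at $y_0^-$ and $-L_-:=-\int_{x_0}^c s'/\varphi^2\,dr$ at $x_0^+$. The crucial preliminary lemma is that $L_\pm$ are \emph{always} finite under $W\geq\delta>0$. Setting $g:=\varphi'/s'$, one has $g(c)=0$ and $g'=m'W\varphi\geq\delta m'\varphi=\delta\Phi'$ with $\Phi(r):=\int_c^r\varphi\,dm$, yielding $g\geq\delta\Phi$ on $[c,y_0)$. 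Combined with the telescoping identity
\[
\int_c^{y_0}s'g/\varphi^2\,dr=\int_c^{y_0}\varphi'/\varphi^2\,dr=1-1/\varphi(y_0^-)\leq 1,
\]
this gives $\int_c^{y_0}s'\Phi/\varphi^2\,dr\leq 1/\delta$, hence $\int_{y_1}^{y_0}s'/\varphi^2\,dr\leq 1/(\delta\Phi(y_1))$ for any $y_1\in(c,y_0)$; thus $L_+<\infty$, and symmetrically $L_-<\infty$.

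For sufficiency ($\Leftarrow$): assuming both (\ref{1.1}) and (\ref{1.2}), any solution $u=\alpha\varphi+\beta\psi\in L^1(m)$ has $u/\varphi\to\alpha+\beta L_+$ at $y_0^-$ and $u/\varphi\to\alpha-\beta L_-$ at $x_0^+$; if either limit is nonzero, then $|u|$ is bounded below by a positive multiple of $\varphi$ on a terminal interval, contradicting the corresponding divergent integral. Hence $\alpha+\beta L_+=\alpha-\beta L_-=0$, forcing $\beta(L_++L_-)=0$ and thus $u\equiv 0$.

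For necessity ($\Rightarrow$), by contraposition: if (\ref{1.2}) fails, so $A':=\int_{x_0}^c\varphi\,dm<\infty$, I take $u:=L_+\varphi-\psi$. On $(c,y_0)$ this equals $\varphi(y)\int_y^{y_0}s'/\varphi^2\,dr>0$ (the Weyl subdominant solution at $y_0$), and Fubini combined with the key lemma bounds $\int_c^{y_0}u\,dm\leq 1/\delta$. On $(x_0,c)$, $u=L_+\varphi+|\psi|$ with $\int_{x_0}^c L_+\varphi\,dm=L_+A'$ and, by Fubini, $\int_{x_0}^c|\psi|\,dm\leq A'L_-$, both finite. Hence $u\in L^1(m)\setminus\{0\}$. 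The symmetric candidate $L_-\varphi+\psi$ handles the case where (\ref{1.1}) fails, and if both fail then $\varphi$ itself lies in $L^1(m)$. The main obstacle is the finiteness lemma for $L_\pm$, which crucially exploits $W\geq\delta>0$ to bound $g$ from below by $\delta\Phi$ and make the Fubini estimates collapse to the clean $1/\delta$ bound.
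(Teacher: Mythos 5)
Your argument is correct, and it reaches the conclusion by a genuinely different route from the paper's. The paper reduces to the $L^1(m)$-kernel of $\LL^{V+\delta}$ exactly as you do (via Theorem \ref{thm21} and the regularity Lemma \ref{domain}), but then it works with \emph{arbitrary} solutions: Proposition \ref{prop31} sandwiches any solution that is increasing (resp.\ decreasing) past a point between constant multiples of $\sum_n I_n^{V+\delta}$ (resp.\ $\sum_n J_n^{V+\delta}$), the sign-propagation Lemma \ref{diff} reduces to one of those two cases, and the necessity direction invokes Feller's existence Lemma \ref{solution} for a globally monotone positive solution whose integrability is then checked endpoint by endpoint. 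You instead exploit the two-dimensionality of the solution space with the explicit basis $\varphi,\psi$: the observation that $\varphi$ \emph{equals} the Neumann series $\sum_n I_n^{V+\delta}$ (resp.\ $\sum_n J_n^{V+\delta}$) sharpens Proposition \ref{prop31} to an identity for that particular solution, and the Wronskian relation $(\psi/\varphi)'=s'/\varphi^2$ together with your finiteness lemma $L_\pm<\infty$ replaces both Lemma \ref{diff} and Lemma \ref{solution}. That lemma is the real new ingredient --- it is not in the paper, and it is what collapses sufficiency to the linear-algebra statement $\alpha\pm\beta L_\pm=0\Rightarrow\alpha=\beta=0$ and makes the necessity construction $L_+\varphi-\psi$ explicitly integrable by Fubini (your $L_+\varphi-\psi$ plays the role of the Feller decreasing solution $h$, but its existence and integrability near $y_0$ come for free from $\int_c^{y_0}s'\Phi/\varphi^2\le 1/\delta$ rather than from a separate existence theorem plus the estimate $\delta\int_c^{y_0}h\,dm\le -h'(c)/s'(c)$). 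The paper's route is shorter given the imported machinery from \cite{WZ2} and Feller; yours is more self-contained, identifies the series $\sum_n I_n^{V+\delta}$ intrinsically as the normalized Neumann solution at $c$ (which also makes Remark \ref{thm1-rmk1} transparent), and the universal finiteness of $\int s'/\varphi^2$ under $V+\delta\ge\delta>0$ is a statement of independent interest. Two cosmetic points only: you overload $W$ for both the shifted potential and the Wronskian, and you should say explicitly that the classical solution $L_+\varphi-\psi$ is a distributional solution with respect to $m$ (immediate from the Feller form $\frac{d}{dm}\frac{d}{ds}-V$ and integration by parts); neither affects correctness.
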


\bdef \label{def31}We say that $y_0$  (resp. $x_0$) is {\it no
entrance} boundary for $\LL^V$ if (\ref{1.1}) (resp. (\ref{1.2}))
holds for some or equivalently for all $\delta>0$. \ndef

In other words the $L^\infty$-uniqueness of $\LL^V$ is equivalent to
say that $x_0, y_0$ are no entrance boundary in the sense of
Definition \ref{def31}. In the presence of the killing potential
$V\ge0$, our definition of no entrance boundary is different from
the classical one of Feller (see Ito-Mckean \cite{IM}), so it is a
new notion. The comparison is given in Corollary \ref{cor32} and
Remarks \ref{rem33}.

\brmk\label{thm1-rmk1} {\rm Denote by $I_n^V(x)$ (resp. $J_n^V(x)$)
by $I_n^V(c;x)$ (resp. $J_n^V(x;c)$). One can prove that (\ref{1.1})
and (\ref{1.2}) do not depend on $c\in (x_0,y_0)$. Its proof is
given later.
} \nrmk

\subsection{Proof of Theorem \ref{thm1}}

Throughout this section, the dual operator $(\LL^V)^*$ is taken
w.r.t. $m$, NOT w.r.t. $dx$ unlike in other places of the paper.

We begin with a series of technical lemmas similar to {\bf Lemma
4.5, Lemma 4.6, Lemma 4.7 } of \cite{WZ2}, so we omit their proofs.

\begin{lem}\label{domain}
Let $u,v\in L^1_{loc}((x_0,y_0),m)$ such that

$$
\<u, \LL^V f\>_m = \<v, f\>_m,\ \forall f\in C_0^\infty(x_0,y_0).
$$
Then

\bdes \item{(i)}  $u$ has a $C^1$-smooth $dx$-version ${\tilde u}$
such that ${\tilde u}'$ is absolutely continuous;

\item{(ii)} $g:=a \tilde u'' +b \tilde u'-V\tilde u=(1/m')
(\tilde u'/s')'-V\tilde u\in L^1_{loc}(m)$. \ndes In that case
$v=g$.
\end{lem}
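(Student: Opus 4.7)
The plan is to produce a $C^1$ representative $\tilde u$ of $u$ solving pointwise the ODE $a\tilde u'' + b\tilde u' - V\tilde u = v$ almost everywhere; both (i) and (ii) then follow directly. Using the Feller form $\LL^V f = \frac{1}{m'}(f'/s')' - Vf$, the hypothesis rewrites as
\[
\int_{x_0}^{y_0} u\,(f'/s')'\,dx \;=\; \int_{x_0}^{y_0} h\,f\,dx, \qquad \forall\, f\in C_0^\infty(x_0,y_0),
\]
where $h := (v + Vu)\,m' \in L^1_{loc}(dx)$ (since $u,v \in L^1_{loc}(m) = L^1_{loc}(dx)$ and $V, m'$ are locally bounded).

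Next, fix $x_1 \in (x_0,y_0)$ and build a particular $C^1$ solution $\tilde u_0(x) := \int_{x_1}^x s'(r)\, H(r)\,dr$, where $H(x) := \int_{x_1}^x h(t)\,dt$ is absolutely continuous. Since $s' H$ is continuous, $\tilde u_0 \in C^1$ with $\tilde u_0'/s' = H$ absolutely continuous and $(\tilde u_0'/s')' = h$ almost everywhere. A Fubini-based double integration-by-parts shows that $\tilde u_0$ also satisfies the integral identity above, so $w := u - \tilde u_0 \in L^1_{loc}(dx)$ verifies the homogeneous version
\[
\int_{x_0}^{y_0} w\,(f'/s')'\,dx = 0, \qquad \forall\, f \in C_0^\infty(x_0, y_0).
\]

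The heart of the argument is a Weyl-type regularity statement: this forces $w = A + c\,s$ almost everywhere for some constants $A, c \in \rr$. Given any $\phi \in C_0^\infty(x_0, y_0)$ satisfying the single linear constraint $\int s'\phi\,dx = 0$, I would set $f_\phi(x) := \int_{x_0}^x s'(t)\phi(t)\,dt$; this $f_\phi$ is $C^1$, compactly supported in $(x_0, y_0)$, with $f_\phi'/s' = \phi$ and hence $(f_\phi'/s')' = \phi'$. Mollifying $f_\phi$ to $f_\phi^\vep \in C_0^\infty(x_0, y_0)$, the a.e.\ identity $(s')' = -(b/a)s'$ together with a direct computation show that $((f_\phi^\vep)'/s')'$ is uniformly bounded in $L^\infty_{loc}$ and converges almost everywhere to $\phi'$ as $\vep \to 0$; dominated convergence then passes to the limit in the homogeneous identity, giving $\int w\,\phi'\,dx = 0$ for every admissible $\phi$. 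In distributional terms, $w'$ annihilates the codimension-one subspace $\{\phi \in C_0^\infty(x_0,y_0) : \int s'\phi\,dx = 0\}$, hence $w' = c\,s'$ for some $c \in \rr$; since $(w - cs)' = 0$ as a distribution, $w = A + c\,s$ almost everywhere.

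Setting $\tilde u := \tilde u_0 + A + c\,s$ produces a $C^1$ representative of $u$ with $\tilde u'/s' = H + c$ absolutely continuous, which establishes (i). For (ii), the product rule for absolutely continuous functions combined with the a.e.\ identities $m' s' = 1/a$ and $(s')' = -(b/a)s'$ yields $\frac{1}{m'}(\tilde u'/s')' = a\tilde u'' + b\tilde u'$ almost everywhere; together with $(\tilde u'/s')' = h = (v + V\tilde u)\,m'$ a.e., this gives $g := a\tilde u'' + b\tilde u' - V\tilde u = v \in L^1_{loc}(m)$. The main technical obstacle is the Weyl-type step: because $s'$ is only absolutely continuous and $m'$ only locally bounded, enlarging the test family from $\{(f'/s')' : f \in C_0^\infty\}$ to $\{\phi' : \phi \in C_0^\infty,\ \int s'\phi\,dx = 0\}$ requires the careful mollification above to secure both the $L^\infty_{loc}$ bound and the a.e.\ convergence of $((f_\phi^\vep)'/s')' \to \phi'$.
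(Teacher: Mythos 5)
Your proof is correct. The paper itself omits the proof of this lemma (it refers to Lemma 4.5 of the cited work of Wu--Zhang), and your argument --- passing to the Feller form, constructing the explicit particular $C^1$ solution $\tilde u_0$ from the double primitive of $h=(v+Vu)m'$, and then identifying the homogeneous distributional solutions as $A+cs$ via a du Bois-Reymond/Weyl-type step, with the mollification genuinely needed since $f_\phi$ is only $C^1$ with an $L^\infty_{loc}$ second derivative --- is exactly the standard route such a proof takes, and the delicate points (local boundedness of $m'$, $1/m'$ so that $L^1_{loc}(m)=L^1_{loc}(dx)$, the a.e.\ identity $(s')'=-(b/a)s'$, and the compact-support constraint $\int s'\phi\,dx=0$) are all handled correctly.
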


\begin{lem}\label{diff}  Suppose that $h$ is $C^1(x_0,y_0)$ such that $h'$ is absolutely
continuous  and $(h'/s')'=Vm'h$.

Assume that $c_1\in (x_0, y_0)$,  $h(c_1)>0$ and $h'(c_1)>0$ (resp.
$h'(c_1)<0$). Then $h'(y)>0$ (resp. $h'(y)<0$) for $\forall y\in
(c_1, y_0)$ (resp. $\forall y\in (x_0, c_1)$).
\end{lem}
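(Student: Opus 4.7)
The plan is to recast the equation in Feller form. Setting $\phi := h'/s'$, the hypothesis $(h'/s')' = V m' h$ means that $\phi$ is locally absolutely continuous with $\phi' = V m' h$ almost everywhere, and integrating from $c_1$ yields
\[
\frac{h'(y)}{s'(y)} = \frac{h'(c_1)}{s'(c_1)} + \int_{c_1}^{y} V(t)\, m'(t)\, h(t)\, dt.
\]
Since $V \geq 0$, $m' > 0$ and $s' > 0$, $\phi$ is monotone non-decreasing on any interval on which $h \geq 0$. The whole argument will be a continuity bootstrap that propagates both the sign of $h$ and the sign of $h'$ outward from $c_1$, each reinforcing the other.

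For the case $h'(c_1) > 0$ and $y > c_1$, I would introduce $y^{*} := \sup\{y \in (c_1, y_0) : h' > 0 \text{ on } (c_1, y)\}$, which strictly exceeds $c_1$ by the continuity of $h'$. On $(c_1, y^{*})$ the function $h$ is then strictly increasing, so $h(t) \geq h(c_1) > 0$, and the integral identity above gives
\[
\frac{h'(y)}{s'(y)} \geq \frac{h'(c_1)}{s'(c_1)} > 0 \quad \text{for all } y \in (c_1, y^{*}).
\]
If $y^{*} < y_0$, letting $y \uparrow y^{*}$ would yield $h'(y^{*}) \geq s'(y^{*}) h'(c_1)/s'(c_1) > 0$, contradicting the maximality of $y^{*}$; hence $y^{*} = y_0$. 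The symmetric case $h'(c_1) < 0$ with $y < c_1$ would be handled analogously by setting $y^{*} := \inf\{y \in (x_0, c_1) : h' < 0 \text{ on } (y, c_1)\}$: on $(y^{*}, c_1)$, $h' < 0$ forces $h$ to be strictly decreasing, so again $h(t) \geq h(c_1) > 0$ there, and the same identity then gives $h'(y)/s'(y) \leq h'(c_1)/s'(c_1) < 0$, ruling out $y^{*} > x_0$ by the same contradiction.

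The one point requiring care, rather than a genuine obstacle, is the joint propagation of ``$h > 0$'' and ``$h'$ keeps its sign'': neither is automatic from the other in the presence of the potential $V$, and one must anchor both conditions at $c_1$ and bootstrap them together along $y$. The standing assumption that $h'$ is absolutely continuous is exactly what legitimizes the integral form of the ODE and all of the continuity and limit steps above.
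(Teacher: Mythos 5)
Your proof is correct. The paper actually omits its own proof of this lemma (deferring to the analogous Lemmas 4.5--4.7 of [WZ2]), but your argument --- rewriting the equation in the integrated Feller form $h'(y)/s'(y)=h'(c_1)/s'(c_1)+\int_{c_1}^{y}Vm'h\,dt$ and running a continuity bootstrap that propagates the positivity of $h$ and the sign of $h'$ together outward from $c_1$ --- is exactly the standard argument the authors intend, and it matches how the lemma is exploited in the proof of Proposition 3.6.
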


\begin{lem}\label{solution} {\rm (essentially due to Feller \cite{fe1})}
Assume that $V\ge\delta>0$, $dx-a.e.$, then there exist two strictly
positive $C^1$-functions $h_k,  k=1, 2$ on $(x_0, y_0)$ such that

(1) For $k=1,  2$,  $h_k'$ is absolutely continuous,  and $(h'_k/s'
)'=m'V h_k$,  a.e.;

(2) $h_1'>0$ and $h_2'<0$ over $(x_0, y_0)$.
\end{lem}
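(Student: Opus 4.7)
The plan is to construct $h_1$ as a $C^1$ subsequential limit of solutions to the ODE $(h'/s')'=m'Vh$ anchored to vanish at interior points $x_n\downarrow x_0$, to build $h_2$ symmetrically from $y_n\uparrow y_0$, and to invoke the strict positivity $V\ge\delta>0$ at the end to upgrade weak monotonicity (nondecreasingness) to strict monotonicity.

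First, I recast the ODE in integral form: for any $c_1\in(x_0,y_0)$ and prescribed data $(\alpha,\beta)$, the equation $(h'/s')'=m'Vh$ with $h(c_1)=\alpha$ and $(h'/s')(c_1)=\beta$ is equivalent to the Volterra equation
\[
h(y)=\alpha+\int_{c_1}^{y}s'(r)\Bigl[\beta+\int_{c_1}^{r}m'(t)V(t)h(t)\,dt\Bigr]dr,
\]
and Picard iteration produces a unique $C^1$ solution with absolutely continuous derivative on the whole of $(x_0,y_0)$; the $n$-th iterate is dominated by nested integrals of the type of $I_n^V$ in Theorem~\ref{thm1}, and routine $(n!)^{-2}$-type bounds give uniform convergence on every compact subinterval. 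Now fix $c\in(x_0,y_0)$, take $x_n\downarrow x_0$, and let $\phi_n$ be the solution with $\phi_n(x_n)=0$ and $(\phi_n'/s')(x_n)=1$. Since $\phi_n'(x_n)=s'(x_n)>0$, both $\phi_n>0$ and $\phi_n'>0$ just to the right of $x_n$, so Lemma~\ref{diff} propagates both inequalities to all of $(x_n,y_0)$. Normalize $\tilde\phi_n:=\phi_n/\phi_n(c)$: then $\tilde\phi_n(c)=1$, $\tilde\phi_n$ is strictly increasing on $(x_n,y_0)$, and $0<\tilde\phi_n\le 1$ on $(x_n,c]$ by monotonicity; on compact subintervals of $(c,y_0)$ the Volterra equation together with a uniform-in-$n$ bound on $(\tilde\phi_n'/s')(c)$ (obtained from the series representation and the lower bound $\phi_n(c)\ge s(c)-s(x_n)$ coming from the monotonicity of $\phi_n'/s'$) provides a uniform ceiling. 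Arzel\`a--Ascoli then extracts a $C^1$ subsequential limit $h_1$ solving the ODE with $h_1(c)=1$, $h_1\ge 0$, $h_1'\ge 0$.

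To upgrade to strict positivity and monotonicity, suppose $h_1(y_*)=0$ at some $y_*\in(x_0,c)$: monotonicity forces $h_1\equiv 0$ on $(x_0,y_*]$, whence $h_1'(y_*)=0$ and IVP uniqueness gives $h_1\equiv 0$, contradicting $h_1(c)=1$; thus $h_1>0$ on all of $(x_0,y_0)$. Invoking $V\ge\delta>0$ then yields $(h_1'/s')'=m'Vh_1>0$ a.e., so $h_1'/s'$ is strictly increasing on $(x_0,y_0)$; combined with $h_1'/s'\ge 0$ (a limit of nonnegative quantities), it cannot vanish at any interior point---the strict increase would force it negative just to the left---so $h_1'/s'>0$ and $h_1'>0$ throughout. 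The construction of $h_2$ is entirely symmetric: take $y_n\uparrow y_0$, let $\psi_n$ solve the IVP with $\psi_n(y_n)=0$ and $(\psi_n'/s')(y_n)=-1$ (giving $\psi_n>0$ and $\psi_n'<0$ on $(x_0,y_n)$ via Lemma~\ref{diff}), normalize at $c$, and pass to a subsequential limit. The main obstacle is the compactness step: obtaining the uniform ceiling on $\{\tilde\phi_n\}$ on compact subintervals reaching to the right of $c$, where plain monotonicity is useless; this forces one to exploit the Volterra series and the strict positivity of $V$ to control $(\phi_n'/s')(c)/\phi_n(c)$ uniformly, and the same hypothesis $V\ge\delta>0$ serves a second time to rule out a flat portion of $h_1'$.
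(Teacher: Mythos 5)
The paper itself omits the proof of this lemma (it is attributed to Feller and to Lemmas 4.5--4.7 of \cite{WZ2}), so the comparison is against the classical construction, which is exactly the architecture you follow: shoot from interior points $x_n\downarrow x_0$ with $\phi_n(x_n)=0$, normalize at $c$, pass to the limit, and use $V\ge\delta>0$ to make the monotonicity strict. Your treatment of the ODE as a Volterra equation, the propagation of positivity via Lemma \ref{diff}, the strict positivity of the limit via IVP uniqueness, and the final upgrade from $h_1'/s'\ge 0$ to $h_1'/s'>0$ are all correct.

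There is, however, a genuine gap at the step you yourself flag as the main obstacle: the uniform bound on $(\tilde\phi_n'/s')(c)=(\phi_n'/s')(c)/\phi_n(c)$. The justification you offer --- the series representation together with $\phi_n(c)\ge s(c)-s(x_n)$ and the ceiling $\phi_n(t)\le\phi_n(c)$ on $[x_n,c]$ --- only yields
$(\tilde\phi_n'/s')(c)\le \phi_n(c)^{-1}+\int_{x_n}^{c}m'(t)V(t)\,dt$,
and the hypotheses of the lemma permit $\int_{x_0}^{c}m'V\,dt=+\infty$ (e.g.\ $a=1$, $b=0$, $V=1/x^2$ on $(0,1)$: here $V\ge 1$ and even the refined bound $\phi_n(t)\le\phi_n(c)\,\frac{s(t)-s(x_n)}{s(c)-s(x_n)}$ coming from convexity in the scale $s$ leaves a divergent integral, since $\int_{x_0}^c m'V\,(s-s(x_0))\,dt=+\infty$ is not excluded either). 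So the estimate as sketched does not close. The standard repair is the constant-Wronskian argument: for two solutions $u,v$ of $(h'/s')'=m'Vh$ the quantity $W=(u'/s')v-u(v'/s')$ is constant; taking $u=\tilde\phi_n$, $v=\tilde\phi_{n+1}$ and evaluating $W$ at $x_n$ and at $c$ shows $(\tilde\phi_n'/s')(c)>(\tilde\phi_{n+1}'/s')(c)>0$, so the normalized slopes at $c$ decrease in $n$ and are automatically bounded; the initial data at $c$ then converge and continuous dependence gives local $C^1$ convergence of the whole sequence, making Arzel\`a--Ascoli unnecessary. With that one insertion (and its mirror image for $h_2$) your proof is complete.
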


Our key observation is

\bprop\label{prop31} Let $h$ be any $C^1$-function on $(x_0,y_0)$
such that $h'$ is absolutely continuous and $(h'/s')'=m'Vh,
dx-a.e.$.

\bdes
\item{(a)} If $h(c)>0, h'(c)>0$ and $\int_c^{y_0}V(t)m'(t)dt>0$, then there is a positive constant
$C$ such that

\bequ\label{prop31a} h(c)\sum_{n=0}^\infty I^V_n(x)\le h(x) \le C
\sum_{n=0}^\infty I^V_n(x), \ \forall x\ge c. \nequ

\item{(b)} If $h(c)>0, h'(c)<0$ and $\int_{x_0}^cV(t)m'(t)dt>0$, then there is a positive constant $C$
such that

\bequ\label{prop31b} h(c)\sum_{n=0}^\infty J^V_n(x)\le h(x) \le C
\sum_{n=0}^\infty J^V_n(x), \ \forall x\le c. \nequ

\ndes

\nprop

\bprf (a) By Lemma \ref{diff},  $h'(r)>0$ for $r\in [c, y_0)$.
Notice that
\begin{align*}
  h(x)=&h(c)+\int^x_{c}h'(r)dr \\
      =&h(c)+
       \int^x_{c}\left\{\frac{h'(c)}{s'(c)}s'(r)
       +s'(r)\int^r_{c}m'(t)V(t)h(t)dt \right\}dr\\
       >&h(c)+\int^x_{c}s'(r)dr\int^r_{c}m'(t)V(t)h(t)dt.
      \end{align*}
 Thus  using the above inequality recursively, we easily obtain:
 $$h(x)\ge h(c)\sum_{n=0}^{+\infty}I^V_n(x),\ \forall x\ge c$$
which is the first inequality in (\ref{prop31a}).

For the second inequality in (\ref{prop31a}), letting $K(x):\
=\int_c^{x}(h'(c)/s'(c))s'(r)dr$ and fixing $c_0\in (c, y_0)$ such
that $\int_c^{c_0} V(t)m'(t)dt>0$, we can choose suitable positive
constants $C_1, \ C_2,\ C_3$ such that for all $x\ge c$,
$$
\aligned
   K(x) & \le
  C_2+C_1\int_{c_0}^{x}s'(r)dr\\
   & \le
   C_2+C_3\int_{c}^{x}s'(r)dr\int_{c}^{r}m'(t)V(t)dt=
 C_2+C_3I^V_1(x).
\endaligned
$$
Setting $C_4=h(c)+C_2$, we get for all $x\ge c$:
\begin{align*}
 h(x)=&h(c)+
       \int^x_{c}\left\{\frac{h'(c)}{s'(c)}s'(r)
 +s'(r)\int^r_{c}m'(t)V(t)h(t)dt \right\}dr\\
\leq &  C_4+C_3I_1(x)+\int_{c}^{x}
                s'(r)dr\int_{c}^{r}m'(t)V(t)h(t)dt.
\end{align*} Using it inductively we obtain for all $x\ge c$,

\begin{align*}
h(x)\le &  C_4+  (C_3+C_4)I_1(x)+C_3I_2(x)\\
             & \ \ \ +\int_{c}^{x}s'(r_1)dr_1\int_c^{r_1} m'(t_1)V(t_1)dt_1
               \int^{t_1}_{c}s'(r_2)dr_2\int^{r_2}_{c}m'(t_2)V(t_2)h(t_2)dt_2\\
      &  \cdots\cdots\cdots\cdots\cdots\cdots\cdots\\
       \leq & C_4+(C_3+C_4)\sum_{n=1}^{+\infty}I^V_n(x).
\end{align*}
where the second inequality in (\ref{prop31a}) follows.

(b) Similar to part(a). \nprf

Let us now to the

\bprf[Proof of Remarks \ref{thm1-rmk1}] We prove here the no
entrance property of $y_0$ does not depend on $c$. Denote $I_n^V(x)$
by $I_n^V(c;x)$ to emphasize the role of $c$. Let $x_0<c<c_1<y_0$.
By Feller's lemma \ref{solution}, there is a strictly increasing
positive $C^1$-function $h=h_1$ on $(x_0,y_0)$ such that $h'$ is
absolutely continuous and
$$
(h'/s')' = (V+\delta)h m'
$$
a.e. on $(x_0,y_0)$. Hence $h'(x)>0$ over $(x_0,y_0)$. By
Proposition \ref{prop31}, there is a constant $C>0$ such that for
all $x\ge c_1$,

$$
h(c) \sum_{n=0}^\infty I_n^{V+\delta}(c;x) \le h(x) \le C
\sum_{n=0}^\infty I_n^{V+\delta}(c_1;x).
$$
That completes the proof.
 \nprf

\begin{proof}[Proof of Theorem \ref{thm1}]  Since the constant
$\lambda_0$ defined in (\ref{radius}) is non-positive, according to
Theorem \ref{thm21} (or more precisely \cite[Theorem 2.1]{WZ2}), the
$L^\infty(m)$-uniqueness of $\LL^V$ is equivalent to : for some or
equivalently for all $\delta>0$, if $h\in L^1(m)$ such that
$$
\<h, (\LL^V-\delta)f\>_m=\<h, \LL^{V+\delta}f\>_m=0,\ \forall f\in
C_0^\infty(x_0,y_0)
$$
then $h=0$. By Lemma \ref{domain}, for such $h$, we may assume that
$h\in C^1(x_0,y_0)$ and $h'$ is absolutely continuous and
 \bequ\label{equation}(h'/s' )'=m' (V+\delta) h.\nequ

{\bf Part ``if":} Assume (\ref{1.1}) and (\ref{1.2}) hold for some
$\delta>0$. Suppose in contrary that $0\ne h\in L^1(m)$ is a
solution of (\ref{equation}). We can assume that $h>0$ on some
interval $[x_1, y_1]\subset (x_0, y_0)$ where $x_1<y_1$. Notice that
$h'\not\equiv 0$ on $(x_1, y_1)$ by (\ref{equation}).

\noindent {\bf Case (i):} $h'(c_1)>0$ for some $c_1\in (x_1, y_1)$.
 We
obtain from Proposition \ref{prop31}(a):
 $$\int_{c_1}^{y_0}h(y)m'(y)dy\ge h(c_1)\int_{c_1}^{y_0}\sum_{n=0}^{+\infty}I^{V+\delta}_n(y)m'(y)dy=+\infty; $$
which is a contradiction with the assumption that $h\in L^1(m)$.

\noindent {\bf Case (ii)}:  $h'(c_1)<0$ for some $c_1\in (x_1,
y_1)$. By Proposition \ref{prop31}(b), we have

$$\int_{x_0}^{c_1}m'(y)h(y)dy\ge h(c_1)\int_{x_0}^{c_1}\sum_{n=0}^{+\infty}J^{V+\delta}_n(y)m'(y)dy=+\infty.$$

{\bf Part ``only if":} Let us prove that  (\ref{1.2}) holds for all
$\delta>0$. Indeed assume in contrary that for some $\delta>0$,
$$
\int^c_{x_0}m'(y)\sum_{n=0}^{+\infty} J_n^{V+\delta}(y)dy<+\infty.
$$
In particular $\int_{x_0}^c m'(y)dy<\infty.$ Consider a solution $h$
of (\ref{equation}) such that $h>0$ and $h'<0$ over $(x_0, y_0)$,
whose existence is assured by Feller's Lemma \ref{solution}. We
shall prove that $h\in L^1(m)$.

{\bf (1) Integrability near $y_0$:} Let $c\in (x_0,y_0)$. For $y\in
(c,y_0)$ we have
$$ 0\geq  h'(y)/s'(y)= h'(c)/s'(c)+\int_c^ym'(t)h(t)(\delta+V(t))dt$$
which implies that $\dps \delta\int_c^{y_0}m'(t)h(t)dt\leq
-h'(c)/s'(c)<+\infty$.

{\bf (2) Integrability near $x_0$:} By Proposition \ref{prop31}(b),
$$
\int_{x_0}^c m'(t)h(t)dt\le C \int_{x_0}^c \sum_{n=0}^{+\infty}
J_n^{V+\delta}(y) m'(y)dy <+\infty.
$$
That completes the proof of the necessity of (\ref{1.2}).  For the
necessity of (\ref{1.1}) for all $\delta>0$, the proof is similar :
The only difference is to use a positive and increasing solution $h$
of (\ref{equation}) (whose existence is guaranteed by Lemma
\ref{solution}).
\end{proof}

\subsection{Several corollaries}

\begin{lem}\label{cor31-lem}
     Assume that $V(x)=0,\ \forall x\in(x_0, y_0)$. The point $y_0$
     is no entrance boundary, i.e.
        (\ref{1.1}) holds iff
          \begin{equation}\label{entr1}
           \int_{c}^{y_0}m'(y)dy\int_{c}^{y}s'(x)dx=+\infty;
          \end{equation}
       and $x_0$ is no entrance boundary, i.e. (\ref{1.2}) holds iff
           \begin{equation}\label{entr2}
            \int_{x_0}^{c}m'(y)dy\int_{y}^{c}s'(x)dx=+\infty.
             \end{equation}
\end{lem}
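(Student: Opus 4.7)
The plan is to reduce the entrance condition at $y_0$ (and symmetrically at $x_0$) to an analysis of the scalar ODE $(h'/s')' = \delta m' h$. When $V \equiv 0$, Theorem \ref{thm1} asks whether $\int_c^{y_0} \Phi_\delta(y) m'(y) dy = +\infty$ with $\Phi_\delta(y) := \sum_{n\ge 0} I_n^\delta(y)$. By Feller's Lemma \ref{solution} there is a strictly positive, strictly increasing $C^1$-solution $h$ of $(h'/s')' = \delta m' h$; in particular $h(c), h'(c) > 0$ and $\int_c^{y_0} \delta m'(t) dt > 0$, so Proposition \ref{prop31}(a) supplies positive constants $k_1, k_2$ with $k_1 \Phi_\delta(y) \le h(y) \le k_2 \Phi_\delta(y)$ on $[c,y_0)$. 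Thus (\ref{1.1}) is equivalent to $\int_c^{y_0} h(y) m'(y) dy = +\infty$, and the task reduces to showing this is equivalent to (\ref{entr1}), i.e. $\int_c^{y_0} A(y) m'(y) dy = +\infty$ where $A(y) := \int_c^y s'(x) dx$.

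Two successive integrations of the ODE, together with Fubini, will give the identity
\[
h(y) = h(c) + \tfrac{h'(c)}{s'(c)} A(y) + \delta \int_c^y m'(t) h(t)\, [A(y) - A(t)] \, dt, \quad y \ge c.
\]
For the direction (\ref{entr1}) $\Rightarrow$ (\ref{1.1}): if $\int_c^{y_0} m'\, dy = +\infty$ the bound $h \ge h(c) > 0$ is already enough; otherwise $\int m'\, dy < \infty$ forces $A(y_0) = +\infty$, and from $h'(y)/s'(y) = h'(c)/s'(c) + \delta \int_c^y m'(t) h(t)\, dt \ge h'(c)/s'(c) + \delta h(c) \int_c^y m'\, dt$ I extract a positive lower bound $L > 0$ on $h'/s'$ for $y$ near $y_0$. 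Integrating yields $h(y) \ge (L/2) A(y)$ eventually, whence $\int h\, m'\, dy \ge (L/2)\int A\, m'\, dy = +\infty$.

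For the converse, assume $\int_c^{y_0} A m'\, dy < \infty$; as $A$ is positive and non-decreasing, this forces $\int_c^{y_0} m'\, dy < \infty$. Set $K(y) := \int_c^y m'(t) h(t)\, dt$; bounding $A(y)-A(t) \le A(y)$ in the integral identity gives $h(y) \le C_0 (1 + A(y)) + \delta A(y) K(y)$ for a constant $C_0$ depending on the initial data. Then $K'(y) = m'(y) h(y) \le C_0 m'(y)(1 + A(y)) + \delta m'(y) A(y) K(y)$; the standard Gronwall lemma, using the integrating factor $\exp(\delta \int_c^y m' A\, ds)$ which is bounded by hypothesis, produces a uniform bound $K(y) \le K_\infty < \infty$. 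Consequently $h(y) \le C_1 (1 + A(y))$, and $\int_c^{y_0} h\, m'\, dy \le C_1 \int_c^{y_0} m'(1 + A)\, dy < \infty$.

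The $x_0$-case will be entirely symmetric: invoke Proposition \ref{prop31}(b) together with the strictly positive, strictly decreasing Feller solution of $(h'/s')' = \delta m' h$ on $(x_0, c]$, and use the reflected analogues $J_n$ and $\int_y^c s'(x)\, dx$. The main obstacle is the Gronwall estimate in the second implication: one must extract the a-priori growth bound $h \le C_1 (1 + A)$ starting from nothing more than the integrability of $A m'$, without assuming $A$ itself is bounded.
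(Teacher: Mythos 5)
Your proof is correct, but it follows a genuinely different route from the paper's. The paper never passes to an actual solution of the ODE: it works directly with the iterated integrals. For $(\ref{entr1})\Rightarrow(\ref{1.1})$ it simply discards all terms but $I_1$ and bounds $I_1(y)\ge \int_{c_1}^{y}s'\,dr\cdot\int_c^{c_1}m'\,dt$ for $y\ge c_1$; for the converse it assumes $m(y_0)<\infty$ (otherwise both conditions hold trivially), unfolds $\int_c^{y_0}I_n\,m'\,dy$ as a $(2n{+}1)$-fold iterated integral, and uses the simplex ordering to get the factorial bound
$\int_c^{y_0}I_n(y)m'(y)dy\le m(y_0)\frac{1}{n!}\bigl(\int_c^{y_0}m'(t)dt\int_c^t s'(r)dr\bigr)^n$, so that the whole series is dominated by $m(y_0)\exp\bigl(\int_c^{y_0}m'\,dy\int_c^y s'\,dx\bigr)$. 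You instead route everything through the Feller solution $h$ of $(h'/s')'=\delta m'h$ via Lemma \ref{solution} and the two-sided comparison $h\asymp\sum_n I_n^{\delta}$ of Proposition \ref{prop31}, reducing the lemma to the equivalence $\int h\,m'=\infty\iff\int A\,m'=\infty$, which you settle by the integral representation $h(y)=h(c)+\frac{h'(c)}{s'(c)}A(y)+\delta\int_c^y m'(t)h(t)[A(y)-A(t)]dt$ and a Gronwall estimate with integrating factor $\exp(\delta\int_c^{\cdot}m'A)$, bounded precisely because $\int m'A<\infty$ in the relevant case. Both arguments are complete; the paper's is shorter and more elementary (one combinatorial bound replaces your existence lemma plus Gronwall), while yours makes the mechanism more transparent — it identifies $\sum_n I_n$ with the genuine increasing solution and shows directly that its integrability against $m$ is governed by that of the scale function $A$. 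The "obstacle" you flag at the end is in fact already resolved by your own argument, since the boundedness of the integrating factor needs only $\int_c^{y_0}Am'\,dy<\infty$, not boundedness of $A$.
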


\begin{proof} We prove here only the equivalence between (\ref{1.1})
and (\ref{entr1}).

$(\ref{entr1})\implies (\ref{1.1}).$ Let $I_n=I_n^{V+1}$ with $V=0$.
This implication is obvious because for some $c_1\in (c,y_0)$,

$$
\int_c^{y_0} I_1(y) m'(y)dy \ge  \int_c^{y_0} m'(y)dy \int_{c_1}^y
s'(x)dx \cdot \int_c^{c_1} m'(y) dy=+\infty.
$$

$(\ref{1.1})\implies (\ref{entr1}).$ If
$m(y_0):=m([c,y_0))=+\infty$, then both (\ref{1.1}) and
(\ref{entr1}) hold true. Assume then $m(y_0)<+\infty$.  We  have
        $$\aligned
        \int_{c}^{y_0}I_n(y)m'(y)dy&\leq
        m(y_0)\int_{c}^{y_0}m'(t_1)dt_1\int_{c}^{t_1}s'(r_1)dr_1\cdots\int_{c}^{r_{n-1}}m'(t_n)dt_n\int_{c}^{t_n}s'(r_n)dr_n\\
        &\le  m(y_0)\frac{1}{n!}\left(\int_{c}^{y_0}m'(t)dt\int_{c}^{t}s'(r)dr\right)^n,\\
        \endaligned$$
hence $$ \int_{c}^{y_0}\sum_{n\geq0}I_n(y)m'(y)dy\leq
m(y_0)\exp{\left(\int_{c}^{y_0}m'(y)dy\int_{c}^{y}s'(x)dx\right)}.
$$ Then (\ref{entr1}) follows immediately from (\ref {1.1}).
\end{proof}

From the lemma above we get immediately from Theorem \ref{thm1}

\begin{cor}{\rm (\cite[Theorem 4.1]{WZ2}\label{cor31})}
$(\LL, C^\infty_0(x_0, y_0))$ is unique in $L^{\infty}(m)$  if and
only if
          \begin{equation}\label{wu1}
                \int_{c}^{y_0}m'(y)dy\int_{c}^{y}s'(x)dx=+\infty;
          \end{equation} and
          \begin{equation}\label{wu2}
            \int_{x_0}^{c}m'(y)dy\int_{y}^{c}s'(x)dx=+\infty;
            \end{equation} hold.
\end{cor}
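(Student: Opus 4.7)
The plan is to obtain this corollary as an immediate specialization of Theorem \ref{thm1} to the case $V\equiv 0$, using the preceding Lemma \ref{cor31-lem} to rewrite the abstract no-entrance conditions in a clean double-integral form.

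First I would invoke Theorem \ref{thm1} with $V\equiv 0$: the $L^\infty(m)$-uniqueness of $(\LL, C_0^\infty(x_0,y_0))$ is equivalent to the series conditions (\ref{1.1}) and (\ref{1.2}) holding for some (equivalently, all) $\delta>0$, where the kernels $I_n^{V+\delta}$ and $J_n^{V+\delta}$ are now $I_n^{\delta}$ and $J_n^{\delta}$, generated by the constant potential $\delta$. By Remarks \ref{thm1-rmk1} this does not depend on the base point $c\in(x_0,y_0)$, and by Definition \ref{def31} it does not depend on the choice of $\delta>0$, so we may fix, say, $\delta=1$.

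Next I would apply Lemma \ref{cor31-lem} (whose proof appears immediately above the corollary). It states precisely that, when $V=0$, the series-type no-entrance condition (\ref{1.1}) at $y_0$ is equivalent to the much simpler integral test (\ref{entr1}), and symmetrically (\ref{1.2}) at $x_0$ is equivalent to (\ref{entr2}). Since (\ref{entr1})--(\ref{entr2}) are syntactically identical to (\ref{wu1})--(\ref{wu2}), the corollary follows.

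There is essentially no obstacle left at this stage: the real work was carried out in proving Theorem \ref{thm1} (the series characterization of $L^\infty$-uniqueness with potential) and in the upper/lower bounds of Lemma \ref{cor31-lem} (the tail estimate $\int_c^{y_0}\sum I_n\, dm \le m(y_0)\exp(\int_c^{y_0} m'(y)dy\int_c^y s'(x)dx)$). The only conceptual point worth flagging is the independence of the no-entrance property from both $\delta>0$ and the reference point $c$, which legitimizes passing from the $\delta$-dependent condition coming out of Theorem \ref{thm1} to the parameter-free statement of Corollary \ref{cor31}.
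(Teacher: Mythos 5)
Your proposal is correct and follows exactly the route the paper takes: the paper derives Corollary \ref{cor31} in one line by combining Theorem \ref{thm1} (specialized to $V\equiv 0$) with Lemma \ref{cor31-lem}, which is precisely your argument. Your additional remarks on the independence of the no-entrance condition from $\delta$ and from the base point $c$ are accurate and consistent with Theorem \ref{thm1} and Remarks \ref{thm1-rmk1}.
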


\bcor\label{cor32} If

\bequ\label{cor32a} \int_c^{y_0} (1+V(t))m'(t) dt \int_c^t
s'(r)dr<+\infty \nequ then $y_0$ is entrance boundary for $\LL^V$.
Similarly if

\bequ\label{cor32b} \int_{x_0}^c (1+V(t))m'(t) dt \int_{t}^c
s'(r)dr<+\infty \nequ then $x_0$ is  entrance boundary for $\LL^V$.

\ncor

\bprf This is obtained by the same proof as that of Lemma
\ref{cor31-lem}. \nprf

\brmk\label{rem33} {\rm In the theory of Feller (see \cite{IM}),
(\ref{cor32a}) and (\ref{cor32b}) are used for the definition of
entrance boundary of $y_0$ and $x_0$ for $\LL^V$. So our definition
of entrance boundary for $\LL^V$ is equivalent to his one if $V=0$
by Corollary \ref{cor31}, but strictly weaker in the presence of a
zero potential $V\ge 0$. For example when $c\in (0,2)$, $0$ is
entrance boundary for $d^2/dx^2 - c/x^2$ on $(0,+\infty)$ in our
sense, but it is not in the sense of Feller's (\ref{cor32b}), see
Example \ref{exam1}.

}  \nrmk

We now turn to

\begin{thm}\label{thm1-cor2} (comparison principle) Let
$$
\LL_kf(x)= a_k(x) f''(x) + b_k(x) f'-V_k(x)f,  \ \forall f\in
C_0^{\infty}(x_0, y_0)
$$
where $(a_k,\   b_k,\  V_k),\   k=1, 2$ satisfy (\ref{ass1}) and
(\ref{ass2}). Assume that for some $c\in (x_0,y_0)$,

\begin{equation}\label{con}
a_1(x)\ge  a_2(x), V_2(x)\geq V_1(x), \end{equation} \bdes
\item{(a).} If \bequ\label{comp2}
 \frac {b_1(x)}{a_1(x)} \le \frac{ b_2(x)}{a_2(x)}, \ x\ge c,\\
\nequ and $y_0$ is no entrance boundary for $\LL_1$, so it is for
$\LL_2$.

\item{(b).} If \bequ\label{comp3}
 \frac {b_1(x)}{a_1(x)} \ge \frac{ b_2(x)}{a_2(x)},\ x\le c,\nequ
 and $x_0$ is no entrance boundary for $\LL_1$, so it is for $\LL_2$.
\ndes
\end{thm}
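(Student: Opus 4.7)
The plan is to prove (a) by a direct termwise comparison in the series characterization of no entrance; (b) then follows by the reflection $x \mapsto 2c - x$, which reverses the orientation of $(x_0, y_0)$ and flips the sign of the drift, translating the hypotheses of (b) into those of (a).

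By Theorem \ref{thm1}, it suffices to show that for some (equivalently every) $\delta > 0$,
$$
\int_c^{y_0} \sum_{n\geq 0} I_n^{V_1+\delta,\LL_1}(y)\, m_1'(y)\, dy = +\infty \quad\Longrightarrow\quad \int_c^{y_0} \sum_{n\geq 0} I_n^{V_2+\delta,\LL_2}(y)\, m_2'(y)\, dy = +\infty.
$$
Unfolding the recursive definition of $I_n^{V+\delta}$ and applying Fubini, the $n$-th term on either side takes the form
$$
\int_{\Omega_n} \prod_{i=1}^n (V_k+\delta)(t_i) \cdot \prod_{i=1}^n s_k'(r_i)\, m_k'(t_i) \cdot m_k'(y)\, dy\, d\mathbf{r}\, d\mathbf{t},
$$
where $\Omega_n = \{\, c \leq t_n \leq r_n \leq t_{n-1} \leq r_{n-1} \leq \cdots \leq t_1 \leq r_1 \leq y \leq y_0 \,\}$ is the \emph{same} simplex for $k=1$ and $k=2$. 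It is therefore enough to show that the $\LL_2$-integrand dominates the $\LL_1$-integrand pointwise on $\Omega_n$.

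The crux is the telescoping identity
$$
\prod_{i=1}^n s_k'(r_i)\, m_k'(t_i) \cdot m_k'(y) = \frac{1}{a_k(y)\, \prod_{i=1}^n a_k(t_i)}\, \exp\!\left( \int_G \frac{b_k(u)}{a_k(u)}\, du \right),
$$
where $G := [c, y] \setminus \bigcup_{i=1}^n [t_i, r_i]$ is the ``gap set''. This comes from $s_k' = \exp(-\int_c^{\cdot} b_k/a_k)$ and $m_k' = (1/a_k)\exp(\int_c^{\cdot} b_k/a_k)$: the exponents telescope, leaving an integral of $b_k/a_k$ over precisely $G$. Under hypothesis (a), on $[c, y_0)$ one has $a_1 \geq a_2 > 0$, $b_2/a_2 \geq b_1/a_1$, and $V_2 \geq V_1$; consequently each of the three factors $1/(a_k(y)\prod_i a_k(t_i))$, $\exp(\int_G b_k/a_k)$, and $\prod_i(V_k+\delta)(t_i)$ is pointwise larger for $k=2$ than for $k=1$. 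Hence the $\LL_2$-integrand dominates the $\LL_1$-integrand on $\Omega_n$; integrating over $\Omega_n$ and summing over $n$ yields the desired implication. There is no real obstacle in the argument; its main merit is to reveal that the three coefficient hypotheses in (a) are exactly what is needed to make this termwise comparison transparent.
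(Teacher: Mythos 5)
Your proof is correct and is essentially the paper's own argument: the paper likewise unfolds $\int_c^{y_0}\sum_n I^{V+1}_{n}\,m_k'\,dy$ into iterated integrals over the same simplex and observes that the $s_k'$ and $m_k'$ factors combine into $\frac{1}{a_k}$-factors times exponentials of $\int b_k/a_k$ over the complementary ``gap'' intervals, each of which is monotone in the hypotheses, giving $B_{n,1}\le B_{n,2}$ termwise. Your explicit reflection $x\mapsto 2c-x$ for part (b) is a tidy way to phrase what the paper dismisses as symmetric, but it is not a genuinely different route.
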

The conditions above are guided by the intuitive picture of no
entrance boundary. Assume $a_1=a_2$ and $y_0$ is no entrance
boundary for $\LL_1$. Condition $b_2\ge b_1$ means that the heat in
the second system described by $\LL_2$ goes more rapidly to the
boundary $y_0$ than in the first system, and condition $V_2\ge V_1$
means that the heat in the second system is killed more rapidly than
in the first. Then the heat from the boundary $y_0$ goes more
difficultly into the interior in the second system than in the first
one.

\begin{proof} We prove here only the implication for $y_0$.
 Let $I_{n,k} (k=1, 2; n\in \nn)$ denote
``$I_n^{V+1}$" with respect to (w.r.t.) $\LL_k \ (k=1, 2).$   By
conditions (\ref{con}) and (\ref{comp2}), we have
\begin{align}
& m_1'(t)(1+V_1(t))\leq m_2'(t)(1+V_2(t)); \\
& \exp\left\{\int_r^{y}\frac{b_1(u)}{a_1(u)}du\right\}\leq
\exp\left\{\int_r^{y}\frac{b_2(u)}{a_2(u)}du\right\}, \ y>r.
\end{align}
Letting $B_{n,k}:=\int_c^{y_0} m_k'(y)I_{n,k}(y)dy$, we have
 \begin{align*}
  B_{n,k}=&\iint\limits_{c\le r_1\le t_1\le y_0} \frac{1}{a_k(t_1)}
e^{\int_{r_1}^{t_1}\frac{b_k(u)}{a_k(u)}du}dr_1\,dt_1\iint\limits_{c\le
r_2\le t_2\le r_1}
\frac{1}{a_k(t_2)}e^{\int_{r_2}^{t_2}\frac{b_k(u)}{a_k(u)}du}\left(V_k(t_2)+1\right)dr_2\,dt_2\\
&\cdots\iint\limits_{c\le r_n\le t_n\le r_{n-1}}
\frac{1}{a_k(t_n)}e^{\int_{r_n}^{t_n}\frac{b_k(u)}{a_k(u)}du}\left(V_k(t_n)+1\right)dr_ndt_n\int_0^{r_n}
(V_k(t_{n+1})+1)m_k'(t_{n+1}) dt_{n+1}.
\end{align*}
Thus $\forall n$, $B_{n,1}\leq B_{n,2}$. Hence the conclusion
follows.
\end{proof}

\subsection{Dirichlet or Neumann boundary problem}
Consider the Sturm-Liouville operator $\LL^V$ on $[x_0,y_0)$ where
$x_0\in \rr$ and $x_0<y_0\le +\infty$, where $a,b,V$ satisfy always
(\ref{ass1}) and (\ref{ass2}) on $[x_0,y_0)$ (instead of
$(x_0,y_0)$). Consider
$$
\DD_D:=\{f\in C_0^\infty[x_0,y_0);\ f(x_0)=0\}
$$
and
$$
\DD_N:=\{f\in C_0^\infty[x_0,y_0);\ f'(x_0)=0\}.
$$
Denote by $\LL^V_D$ (resp. $\LL^V_N$) the operator with domain of
definition $\DD_D$ (resp. $\DD_N$), which corresponds to the
Dirichlet boundary (resp. Neumann) boundary condition at $x_0$. One
can define $\LL^V_D$ and $\LL^V_N$ similarly on $(x_0,y_0]$ where
$-\infty\le x_0<y_0<+\infty$.

With exactly the same proof we have

\bthm \label{thmA1} $\LL^V_D$ (or $\LL^V_N$) is
$L^\infty([x_0,y_0),m)$-unique iff $y_0$ is no entrance boundary.
$\LL^V_D$ (or $\LL^V_N$) is $L^\infty((x_0,y_0],m)$-unique iff $x_0$
is no entrance boundary.  \nthm

\subsection{About $L^p(m)$-uniqueness of $\LL^V$}

\bprop\label{prop3.14} (\cite{WYZ}) $\LL^V$ is $L^1(m)$-unique iff
$$
\aligned &I_1^{V+1}(y_0)=\int_c^{y_0} s'(r) dr \int_c^r m'(t) (1+
V(t))
dt=+\infty;\\
&J_1^{V+1}(x_0)=\int^c_{x_0} s'(r) dr \int_r^c m'(t) (1+ V(t))
dt=+\infty.
\endaligned
$$ \nprop

With exactly the same proof as that of Theorem \ref{thm1}, we have
by Proposition \ref{prop31},

\bprop\label{prop3.15} Let $p\in (1,+\infty)$. $\LL^V$ is
$L^p(m)$-unique iff for some or all $\delta>0$,

\bequ\label{prop3.15a} \int_c^{y_0}\left(\sum_{n=0}^\infty
I^{V+\delta}_n(y) \right)^q m'(y) dy =+\infty;\nequ

\bequ\label{prop3.15b} \int_{x_0}^c\left(\sum_{n=0}^\infty
J^{V+\delta}_n(y) \right)^q m'(y) dy =+\infty. \nequ \nprop

\bdef\label{def32} {\rm If the condition (\ref{prop3.15a}) (resp.
(\ref{prop3.15b})) is verified, $y_0$ (resp. $x_0$) will be called
{\it $L^q(m)$-no entrance boundary} for $\LL^V$. } \ndef

So the no entrance boundary in Definition \ref{def31} is $L^1(m)$-no
entrance boundary. If $V=0$, a much easier criterion is available :
 \bprop\label{prop3.16} {\rm (due to Eberle \cite{[Eb]} and
Djellout \cite{Dj97})} Assume that $V=0$. $\LL$ is $L^p(m)$-unique
iff

          \bequ\label{prop3.16a}
           \int_{c}^{y_0}\left(\int_{c}^{y}s'(x)dx\right)^qm'(y)dy=+\infty;
          \nequ
        and

\bequ\label{prop3.16b}
            \int_{x_0}^{c}\left(\int_{y}^{c}s'(x)dx\right)^qm'(y)dy=+\infty.
            \nequ
\nprop

\subsection{Several examples}
For applications of the comparison principle in Theorem
\ref{thm1-cor2}, we should have some standard examples.

\bexam\label{exam1} {\bf (combination of Weil's example and Bessel's
diffusion)} {\rm Let
$$
(x_0,\ y_0)=(0,+\infty),\ \LL^V
f=f''+\frac{\gamma}{x}f'-\frac{c}{x^2},\ c\geq0,\gamma\in \rr.
$$
When $\gamma=0$, this is Weil's example mentioned before, and when
$c=0$, it is the Bessel's process with dimension $\gamma+1$. For
this example $s'(x)=x^{-\gamma},\ m'(x)=x^\gamma$ and $V(x)=c/x^2$.
$+\infty$ is no entrance boundary for $\LL f=
f''+\frac{\gamma}{x}f'$, so for $\LL^V$ by the comparison principle
in Theorem \ref{thm1-cor2}.  Furthermore condition (\ref{prop3.16a})
is verified for $y_0=+\infty$, so does (\ref{prop3.15a}).

 One decreasing solution for
$(\LL^V)^*h=0$ is given by
$$
h_c(x)= x^{\alpha},\
\alpha=\frac{-(\gamma-1)-\sqrt{(\gamma-1)^2+4c}}{2}.
$$
We have \bdes
\item{(a).} $(\LL,\ C_0^\infty(0,+\infty))$ is $L^1-$unique if and
only if $c>0$ or $c=0$ and $\gamma\geq1$, by Proposition
\ref{prop3.14}.

\item{(b)} If $c=0$ and $p\in (1, +\infty]$, $(\LL,\ C_0^\infty(0,+\infty))$ is
$L^p(m)-$unique if and only if $\gamma\leq-1$ or $\gamma\ge 2p-1$ by
Proposition \ref{prop3.16}.
\item{(c).} Let $p\in(1,+\infty]$ and $c>0$. $(\LL,\ C_0^\infty(0,+\infty))$ is
$L^p(m)-$unique if and only if $\alpha q+\gamma\leq -1 $ (where
$\dps \alpha=\frac{-(\gamma-1)-\sqrt{(\gamma-1)^2+4c}}{2},\
\frac{1}{p}+\frac{1}{q}=1$), or equivalently
$$
c\ge c_{cr}(q,\gamma):= \frac{(\gamma+1)^2}{q^2} -
\frac{\gamma^2-1}{q}.
$$
When $p=2, \gamma=0$, we find Weil's critical value $3/4$.

\item{(d)} If $\gamma=0$ and $p\in(1,+\infty]$, $(\LL,\
C_0^\infty(0,+\infty))$ is $L^p(dx)-$unique if and only if
$c\geq\frac{1}{q^2}+\frac{1}{q}$. This is a particular case of (c).
 \ndes

\bprf[Proof of part (c)] If $\alpha q+\gamma\leq -1 $, $\int_0^1
h_c^q (x) m'(x)dx=+\infty$. By Proposition \ref{prop31},
$\sum_{n=0}^\infty J_n^V\notin L^q((0,1], m'(x)dx)$, hence
$\sum_{n=0}^\infty J_n^{V+1}\notin L^q((0,1], m'(x)dx)$ (for
$J_n^{V+1}\ge J_n^V$). Thus by Theorem \ref{thm1} and Proposition
\ref{prop3.15}, $\LL^V$ is $L^p(m)$-unique.

 If $\alpha q+\gamma > -1$,  $\int_0^1 h_{c(1+\vep)}^{q}
(x) m'(x)dx<+\infty$ for some small $\vep>0$. By Proposition
\ref{prop31}, $\sum_{n=0}^\infty J_n^{(1+\vep)V}\in L^{q}((0,1],
m'(x)dx)$. But for $x\in (0,1]$, as $V+\delta =\frac c{x^2} + \delta
\le \frac {c(1+\vep)}{x^2}$ for $\delta\in (0,\vep)$, we have
$J_n^{V+\delta}\le J_n^{(1+\vep)V}$. Therefore $\sum_{n=0}^\infty
J_n^{V+\delta}\in L^q((0,1], m)$, $\LL^V$ is not $L^p(m)$-unique by
Theorem \ref{thm1} and Proposition \ref{prop3.15}. \nprf } \nexam

\bexam{\rm Let $(x_0,\ y_0)=(0,+\infty)$, $c\geq0,\gamma,\kappa\in
\rr, \kappa\ne 0$ and
$$\LL^V f=x^{\kappa}\left(f''+\frac{\gamma}{x}f'-\frac{c}{x^2}\right),\  f\in C_0^\infty(0,+\infty).$$
We have $s'(x)=x^{-\gamma}$, $m'(x)=x^{-\kappa+\gamma}$. Let $h_c$
be given as in the previous example, which is again
$(\LL^V)^*$-harmonic function. By the same proof as above, we have :
$0$ is {\it $L^q(m)$-no entrance boundary} iff $\alpha q +\gamma
-\kappa\le -1$, where $\dps
\alpha=\frac{-(\gamma-1)-\sqrt{(\gamma-1)^2+4c}}{2}$ and $q\in
[1,+\infty)$. }\nexam

\bexam\label{exa33}{\rm Let $(x_0,y_0)=\rr$, $a(x)=1$ and
$b(x)=\gamma (|x|^\alpha)'$ for $|x|>1$ and continuous on $\rr$, and
$V(x)=c |x|^\beta$ for $|x|>1$ and continuous and nonnegative on
$\rr$. Here $\gamma \in \rr$ and $\alpha, \beta, c\ge0$. In this
example $m'(x)= e^{\gamma |x|^\alpha}$, $s'(x)=e^{-\gamma
|x|^\alpha}$ for $|x|>1$ (for simplicity we have forgotten a
constant factor in $m'$ and $s'$, which plays no role in our
history). The operator $\LL^V$ is given by
$$
\LL^V f = f'' + \gamma \alpha {\rm sgn}(x)  |x|^{\alpha-1} f' -
c|x|^\beta, \ |x|>1.
$$
{\bf 1).} For all $1<p<\infty$, $\LL^V$ is $L^p(m)$-unique  by
applying Proposition \ref{prop3.16} to the case $V=0$ and then
Proposition \ref{prop3.15}.

\vskip10pt\noindent {\bf 2).}  $\LL^V$ is $L^1(m)$-unique iff
$\gamma\le 0$ or ($\gamma>0$ and $1_{c>0}\beta\ge \alpha-2$), by
Proposition \ref{prop3.14}.

\vskip10pt\noindent {\bf 3)} $\LL$ is $L^\infty(m)$-unique iff
$\gamma\ge 0$ or ``$\gamma<0$ and $\alpha \le 2$" (by \cite[Example
4.10]{WZ2}). In such case $\LL^V$ is $L^\infty(m)$-unique by the
comparison principle in Theorem \ref{thm1-cor2}.

Let $\gamma<0$ and $\alpha>2$ below. Then $\LL$ is not
$L^\infty(m)$-unique, and our purpose is to find the critical
potential $V=c|x|^\beta$ so that $\LL^V$ is $L^\infty(m)$-unique or
equivalently $+\infty$ is no entrance boundary.

{\bf Claim : }{\it  Let $\gamma<0$ and $\alpha>2$ and
$\beta=\alpha-2$. Set $c_{cr}=|\gamma| \alpha (\alpha-2)$. If
$c>c_{cr}$, $\LL^V$ is $L^\infty(m)$-unique; if $c<c_{cr}$,  $\LL^V$
is not $L^\infty(m)$-unique. }

By the symmetry we have only to regard if $+\infty$ is no entrance
boundary. For two positive functions $f,g$, we write $f\sim g$ (at
$+\infty$), if $\lim_{x\to +\infty} f(x)/g(x)=1$; and $f\propto g$
(at $+\infty$), if there are two positive constants $C_1,C_2$ such
that $C_1 f(x)\le g(x)\le C_2 f(x)$ for all $x$ large enough (say
$x\gg 1$).

To prove the claim, consider $h=s(\log s)^{\kappa}$, where $s(1)=e$
and $\kappa>0$. We have $\LL h =h \tilde V$ where
$$
\tilde V = \frac{s'^2}{s^2}\left(\frac \kappa {\log s} +
\frac{\kappa(\kappa-1)}{(\log s)^2 }\right)\sim |\gamma|\kappa
\alpha^2 x^{\alpha -2}
$$
by using $\dps s(x)\sim \frac{e^{|\gamma| x^\alpha}}{|\gamma| \alpha
x^{\alpha-1}}$. Note that $\dps s(\log s)^{\kappa} m'\propto
\frac{1}{x^{\alpha-1-\alpha\kappa}}$. Then $h\in L^1([1,+\infty),
m)$ iff $\kappa< \kappa_0=(\alpha-2)/\alpha$. For $\kappa=\kappa_0$,
$\tilde V\sim c_{cr} x^{\alpha -2}$. Now one can conclude the claim
by means of Proposition \ref{prop31} and Theorem \ref{thm1}.

 } \nexam

\section{Riemannian manifold case: comparison with one-dimensional
case}
 In this section, let  $(M, g)$ be a connected oriented
 non-compact
Riemannian manifold of dimension $d\geq $2 with  metric $g$ without
boundary, but not necessarily complete. Throughout this section we
denote by $dx$ the volume element, given in local coordinates by $
dx|_U=\sqrt{G}dx_1dx_2\cdots dx_d, \mbox{ where }\ G=det(g_{ij})$.
Let $TM$ be the tangent bundle on $M$. Let $L_{loc}^p(M)$ ($p\in
[1,+\infty]$) be the space of all real measurable  functions $f$
such that $f1_K\in L^p(M):=L^p(M,dx)$ for every compact subset $K$
of $M$. Let $H^{1, 2}(M)$ (resp. $H^{1, 2}_{loc}(M)$) be the space
of those functions $f\in L^2(M)$ (resp. $f\in L^2_{loc}(M)$) such
that $|\nabla f|\in L^2(M)$ (resp. $|\nabla f|\in L^2_{loc}(M)$)
where the gradient $\nabla f$ is taken in the distribution sense,
and $|\cdot|$ is the Riemannian metric.

Let us consider the following operator:
\begin{equation}\label{manifoldgenerator}
 \LL^Vf(x)=\Delta f(x)+ b (x)\cdot\nabla f(x)
-V(x)f(x),\ f\in C_0^\infty(M)
\end{equation}
where $\Delta, \nabla$ are respectively the Laplace-Beltrami
operator and the gradient on $M$, and $b $ is a locally Lipschitzian
vector field, $0\le V\in L^\infty_{loc}(M)$ (assumed throughout this
section). We write $\LL$ instead of $\LL^V$ if $V=0$.

 Now regard $\LL^V$ as an operator
 on $L^\infty(M)$ which is endowed with the topology $\CC \left(L^{\infty}(M),\ L^1(M)\right)$, with domain of definition
$C_0^{\infty}\left(M\right)$. Our purpose is to find some sharp
 sufficient condition for the $L^\infty$-uniqueness of $\left(\LL^V,\
C_0^{\infty}\left(M\right)\right)$.

\vskip10pt
 {\bf{Assumption (A)}}
\bdes
 \item{\bf(1) } $\rho:\ M\longrightarrow[x_0, y_0)$ is surjective, where $0\leq
x_0<y_0\leq +\infty$, such that $\rho^{-1}\left([x_0,\  l]\right)$
is compact subset for all $l\in[x_0, y_0)$, and there is some $c\in
[x_0,y_0)$ such that $\rho$ is $C^2$-smooth and $|\nabla \rho|>0$ on
$[\rho>c]$;

 \item{\bf (2)} there  exist $\alpha (r),\  \beta(r),\  q(r) \in L_{loc}^{\infty}\left([x_0,
y_0),dr\right)$, $q(r)\geq0, \ \alpha>0,\ 1/\alpha (r)\in
L^\infty_{loc}([x_0,y_0),dr)$ and $c\in[x_0,\ y_0)$ such that
$dx$-a.e. on $[\rho>c]$, \ndes

\bequ \label{22ass2} |\nabla \rho(x)|^2\leq \alpha(\rho(x)); \nequ

\begin{equation}\label{2ass1}
\LL\rho (x)\geq \frac{\beta(\rho(x))}{\alpha(\rho\left(x\right))}
|\nabla \rho(x)|^2;
\end{equation}

\begin{equation}
\label{2ass2} V(x)\geq q(\rho (x)).
\end{equation}

\begin{thm}\label{mainthm}
Under the assumption {\bf (A)}, if $y_0$ is no entrance boundary for
$\LL^{1, q}=\alpha(r)\frac{d^2}{dr^2}+\beta(r)\frac{d}{dr}-q(r)$,
then $\left(\LL^V, C_0^\infty(M)\right)$ is
$L^{\infty}(M,dx)$-unique.
\end{thm}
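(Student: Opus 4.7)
The plan is to apply Theorem~\ref{thm21}(ii): it suffices to show that for some $\delta>0$, the only $h\in L^1(M,dx)$ satisfying $\langle h,(\LL^V-\delta)f\rangle=0$ for all $f\in C_0^\infty(M)$ is $h\equiv 0$. First I would invoke interior elliptic regularity to replace $h$ by a locally Sobolev version solving $(\LL^V)^*h-\delta h=0$ a.e., and then use Kato's inequality to obtain
$$\int_M |h|\,(\LL^V-\delta)f\,dx \ge 0, \qquad \forall\, 0\le f\in C_c^2(M).$$
The remaining task is to exhibit a nonnegative test function $f$ against which this inequality forces $|h|=0$.

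The key step is to construct a radial \emph{supersolution} $\phi = g\circ\rho$ from the one-dimensional comparison operator $\LL^{1,q+\delta}$. By Feller's Lemma~\ref{solution} applied to $\LL^{1,q+\delta}$ on $(x_0,y_0)$, I pick a strictly positive, strictly \emph{decreasing} $C^1$ solution $g$ of $\alpha g''+\beta g'-(q+\delta)g=0$. Substituting $\alpha g''=(q+\delta)g-\beta g'$ into the direct computation of $(\LL^V-\delta)(g\circ\rho)= g''(\rho)|\nabla\rho|^2+g'(\rho)\LL\rho-(V+\delta)g(\rho)$ and regrouping gives
$$(\LL^V-\delta)\phi = g(\rho)\Bigl[(q(\rho)+\delta)\tfrac{|\nabla\rho|^2}{\alpha(\rho)}-(V+\delta)\Bigr] + g'(\rho)\Bigl[\LL\rho-\tfrac{\beta(\rho)|\nabla\rho|^2}{\alpha(\rho)}\Bigr].$$
Assumption (A) is tailored so that each bracket carries a definite sign: the first is $\le 0$ by \eqref{22ass2} and \eqref{2ass2}, while the second is $\ge 0$ by \eqref{2ass1}. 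Combined with $g>0$ and $g'<0$, both summands are $\le 0$, so $(\LL^V-\delta)\phi\le 0$ on $\{\rho>c\}$. I would then patch $\phi$ with a smooth positive function on the compact set $\{\rho\le c\}$ to obtain a global $\phi\in C^2(M)$.

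To exploit this supersolution I plug $f_R=\eta_R(\rho)\phi$ into the Kato inequality, where $\eta_R\in C_c^2([x_0,y_0))$ is a standard cutoff with $\eta_R\equiv 1$ on $[x_0,R]$ and $\operatorname{supp}\eta_R\subset[x_0,R+1]$. A direct expansion yields
$$(\LL^V-\delta)f_R = \eta_R(\rho)(\LL^V-\delta)\phi + \eta_R'(\rho)\bigl[\phi\LL\rho + 2g'(\rho)|\nabla\rho|^2\bigr] + \eta_R''(\rho)\phi|\nabla\rho|^2,$$
the last two ``commutator'' terms being supported on $\{R\le\rho\le R+1\}$. Letting $R\uparrow y_0$, the principal term converges monotonically to $\int|h|(\LL^V-\delta)\phi\,dx$, and the commutator contributions should tend to $0$ precisely because $y_0$ is no entrance---the integrability characterization in Proposition~\ref{prop31}, combined with the Wronskian relation between the increasing and the decreasing $\LL^{1,q+\delta}$-solutions, supplies the required quantitative control of $\int_{R\le\rho\le R+1}|h|(\phi|\LL\rho|+|g'(\rho)||\nabla\rho|^2)\,dx$. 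In the limit one obtains $\int_M|h|(\LL^V-\delta)\phi\,dx\ge 0$; since the integrand is pointwise $\le 0$, necessarily $|h|\cdot(\LL^V-\delta)\phi\equiv 0$ a.e. A final connectedness/unique-continuation step, exploiting the strict negativity of $(\LL^V-\delta)\phi$ wherever one of the inequalities in Assumption (A) is strict, then yields $h\equiv 0$.

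The hard part will be the passage to the limit $R\uparrow y_0$: although $h\in L^1(M,dx)$ automatically makes $\int_{\rho>R}|h|\,dx\to 0$, the coefficients appearing in the commutator (e.g.\ $\alpha(R)$ or $|\LL\rho|$ along $\{\rho=R\}$) are a priori unbounded. The no-entrance hypothesis is exactly the quantitative input needed to dominate these coefficients by the tail of $|h|$, and turning this heuristic coupling into a rigorous bound is the crux of the argument. A secondary, more routine, difficulty is the smooth global extension of $g\circ\rho$ across the compact set $\{\rho\le c\}$, where $\rho$ need not be $C^2$.
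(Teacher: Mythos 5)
Your opening moves coincide with the paper's: the reduction via Theorem \ref{thm21} to showing that $\int_M u(\LL^V-\delta)f\,dx=0$ has no nontrivial $L^1$ solution, the regularity step, and the Kato inequality of Lemma \ref{kato} are exactly the paper's Lemma \ref{kato} and the start of its proof of Theorem \ref{mainthm}. Your sign computation for $\phi=g\circ\rho$ is also correct. But the strategy built on this supersolution cannot close. The decisive problem is your last step: it needs $(\LL^V-\delta)\phi$ to be strictly negative somewhere, whereas Assumption (A) permits equality everywhere. On a model (radially symmetric) manifold with $|\nabla\rho|^2=\alpha(\rho)$, $\LL\rho=\beta(\rho)$ and $V=q(\rho)$ exactly, both brackets in your identity vanish and $(\LL^V-\delta)\phi\equiv0$ on $[\rho>c]$, so the conclusion $|h|\cdot(\LL^V-\delta)\phi\equiv0$ carries no information. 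This is not a degenerate corner case: it is precisely the sharp situation the theorem targets (cf. Theorem \ref{thmA1} and Example \ref{exa41}).

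There is a second, independent gap: the passage $R\uparrow y_0$, which you yourself identify as the crux, is left unproved, and the mechanism you invoke is implausible. The no-entrance hypothesis says that the \emph{increasing} Feller solution $h_1$ satisfies $\int^{y_0}\sum_n I_n^{q+\delta}\,m'\,dr=+\infty$ (equivalently $h_1\notin L^1(m)$ near $y_0$, by Proposition \ref{prop31}); your $\phi$ is built from the \emph{decreasing} solution, which exists and is $m$-integrable near $y_0$ whether or not $y_0$ is no entrance (see step (1) in the ``only if'' part of the proof of Theorem \ref{thm1}). Thus the hypothesis never enters your construction and would have to act entirely through the unestablished commutator estimate, for which Assumption (A) gives only one-sided bounds on $\LL\rho$ and no control of $\alpha$ on the annuli. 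The paper proceeds in the opposite direction: after the same Kato inequality it tests against the trapezoidal cutoff $f=h(\rho)$, uses the divergence theorem and the co-area formula to reduce everything to a one-dimensional integro-differential inequality for $G(r)=\int_{[\rho\le r]}|\nabla\rho|^2|u|\,dx$, namely $(G'/\tilde m')'\ge s'\int_c^{\cdot}\frac{q+1}{\alpha}G'\,dt$, shows $G'(r_0)>0$ for some $r_0$ (otherwise $u$ vanishes off a compact set and Lemma \ref{lem43} gives $u=0$), and iterates as in Proposition \ref{prop31} to obtain $G'/\tilde m'\ge C\sum_nI_n^{q+1}$, whence $\int_{[\rho\ge r_0]}|u|\,dx=+\infty$, contradicting $u\in L^1$. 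To repair your argument you would essentially have to replace the decreasing supersolution by this lower bound on the growth of the mass of $|u|$, i.e., reproduce the paper's proof.
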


\brmk{\rm Our assumption {\bf (A)} is inspired from the comparison
theorems in the theory of stochastic differential equations
(\cite[Chap. VI, Sections 4 and 5]{IW}).  Assume that
$|\nabla\rho|^2=\alpha(\rho)$. Let $(X_t)_{0\le t<\sigma}$ be the
diffusion generated by $\LL$ and $\eta_t$ by
$\alpha(r)\frac{d^2}{dr^2}+\beta(r)\frac{d}{dr}$ with
$\rho(X_0)=\eta_0>c$. If (\ref{22ass2}) holds, one can realize $X_t$
and $\eta_t$ on the same probability space so that $\rho(X_t)\ge
\eta_t$ before returning to $c$. In other words  $\rho(X_t)$ goes to
$y_0$ (i.e., $X_t$ goes to infinity) more rapidly than $\eta_t$.
Then under  the assumption {\bf (A)}, if $y_0$ is no entrance
boundary for $\LL^{1,q}$,  the heat from ``boundary" of $M$ is again
more difficult to enter into $M$ : it should be ``no entrance
boundary". The result above justifies this intuition. }\nrmk

Let us begin with a Kato type inequality.
\begin{lem}\label{kato}
If $u\in L^1(M, dx)$ satisfies
\begin{equation}\label{2equation}
\int_M u(\LL^V-1)fdx=0, \quad\forall f \in C_0^{\infty}(M)
\end{equation}
Then $u\in C^1(M)$ (more precisely one version of $u$ is
$C^1$-smooth) and
\begin{equation}\label{kato2}
-\int_M \nabla |u|\cdot  \nabla fdx+\int_M b  \cdot \nabla f\cdot
|u|dx\geq \int_M(V+1)f|u|dx
\end{equation}
for all positive, compactly supported functions $f\in
H_{loc}^{1, 2}(M)$.\\
\end{lem}
\begin{proof} By \cite[Theorem 1]{BR1}, we know $u\in H_{loc}^{1,
2}(M)\cap L_{loc}^{\infty}(M).$ Using $\Delta u = div(ub )+(V+1)u$
and Sobolev's embedding theorems recursively, $u\in C^1(M)$. The
remained proof can follow word-by-word Eberle \cite[Theorem 2.5 step
2]{[Eb]} (in ``$V=0$" case), so omitted.
\end{proof}

\blem\label{lem43} If $u\in H^{1,2}_{loc}(M)$ satisfies
$$
  \<u, \LL^V f\>=0, \ \forall f\in C_0^\infty(M)
$$
and $u=0$ $dx-a.e.$ outside of some compact subset $K$ of $M$, then
$u=0$. \nlem

This is contained in the folklore of elliptic PDE, so we omit its
proof.

\begin{proof}[\it Proof of Theorem \ref{mainthm}] According to Theorem  \ref{thm21},
we have only to show that the equation
\begin{equation} \label{mainthm1}
\int_M u(\LL^V-1)fdx=0 , \quad\forall f \in C_0^{\infty}(M)
\end{equation}
 has no non-trivial
$L^1(M,dx)$ solution. Assume by absurd that there is some non-zero
$u\in L^1(M)$ satisfying (\ref{mainthm1}). By Lemma \ref{kato},
$u\in C^1(M)$ and (\ref{kato2}) holds.

 For all $r_1,r_2$ such that $x_0\leq c<r_1<r_2<y_0$, put $h(r):=\min\{r_2-r_1,
 (r_2-|r|)^+\}$ and $f:=h(\rho(x))$. Plugging such $f$ into (\ref{kato2}) we obtain:
 \bequ \label{222}
 \int_{\{r_1\leq \rho(x)\leq r_2\}}\nabla |u| \cdot \nabla
 \rho dx-\int_{\{r_1\leq \rho(x)\leq r_2\}}b  \cdot \nabla \rho\cdot
 |u|dx\geq\int_M(V+1)|u|h(\rho) dx.
 \nequ
Since $\nabla |u|\cdot \nabla \rho=div(|u|\nabla
\rho)-|u|\Delta\rho$, we have
 \begin{align*}
\int_{\{r_1\leq \rho(x)\leq r_2\}}&\nabla |u|\cdot \nabla
 \rho dx=\int_{\{r_1\leq \rho(x)\leq r_2\}}[div(|u|\nabla
\rho)-|u|\Delta\rho ]dx\\
&\stackrel{(i)}{=}\int_{\{\rho=r_2\}}|u|\cdot|\nabla
\rho|d\sigma_M-\int_{\{\rho=r_1\}}|u|\cdot|\nabla
\rho|d\sigma_M-\int_{\{r_1\leq \rho(x)\leq r_2\}}|u|\Delta\rho dx
\end{align*}
where $(i)$ follows from the Divergence Theorem, $\sigma_M$ is the
$(d-1)$-dimensional  surface  measure on the $C^2$-smooth $\{\rho=
r\}$ induced by the volume measure $dx$. Using the preceding
equality, we get:
\begin{align*}
\int_{\{\rho=r_2\}}|u|\cdot|&\nabla
\rho|d\sigma_M-\int_{\{\rho=r_1\}}|u|\cdot|\nabla
\rho|d\sigma_M
-\int_{\{r_1\leq \rho(x)\leq r_2\}}|u|\cdot|\nabla
\rho|^2\frac{\beta(\rho)}{\alpha(\rho)}dx\\
&\stackrel{(i)}{\geq}\int_{\{\rho=r_2\}}|u|\cdot|\nabla
\rho|d\sigma_M-\int_{\{\rho=r_1\}}|u|\cdot|\nabla
\rho|d\sigma_M-\int_{\{r_1\leq \rho(x)\leq r_2\}}|u|\LL\rho dx\\
&=\int_{\{r_1\leq \rho(x)\leq r_2\}}\nabla |u|\cdot \nabla
 \rho dx+\int_{\{r_1\leq \rho(x)\leq r_2\}}|u|\Delta\rho dx-\int_{\{r_1\leq \rho(x)\leq r_2\}}|u|\LL\rho
 dx\\
&\stackrel{(ii)}{\geq} \int_M (V+1)|u|h(\rho)dx\ge
\int_M\frac{V+1}{\alpha(\rho)}|u|\cdot|\nabla
\rho|^2h(\rho)dx\\
 &\stackrel{(iii)}{\geq}\int_M\frac{q(\rho)+1}{\alpha(\rho)}|u|\cdot|\nabla
\rho|^2h(\rho)dx
\end{align*}
where $(i)$ follows from condition (\ref{2ass1}), $(ii)$ follows
from  (\ref{222}) and (\ref{22ass2}), $(iii)$ follows from
(\ref{2ass2}).

 Now let
$G(r)=\int_{\{\rho(x)\leq r\}}|\nabla \rho|^2\cdot |u| dx$.  By the
Co-area formula (Federer v\cite[Theorem 3.2.12]{Federer}): for
$r_2>r_1>c$, \bequ\label{co} \int_{\{\rho(x)\in
[r_1,r_2]\}}\mid\nabla \rho(x)\mid f(x)dx=\int_{r_1}^{r_2}
dr\int_{\{\rho(x)=r\}}f(x)d\sigma_M,  \nequ $G$ is absolutely
continuous on $r\in (c,y_0)$ and
$$G'(r) = \int_{\{\rho(x)= r\}} |\nabla \rho|\cdot|u|
d\sigma_M,\ dr-a.e. \ r>c. $$ From now on we fix $G'(r)$ as the
right hand side above. By the Co-area formula we also have
 $$\int_{\{r_1\leq \rho(x)\leq
r_2\}}|u|\cdot|\nabla
\rho|^2\frac{\beta(\rho)}{\alpha(\rho)}dx=\int_{r_1}^{r_2}G'(r)\frac{\beta(r)}{\alpha(r)}dr
$$ the previous inequality
is read as :
 \begin{equation}\label{2aaa}
 G'(r_2)-G'(r_1)-\int_{r_1}^{r_2}G'(r)\frac{\beta(r)}{\alpha(r)}dr\ge
  \int_c^{y_0}\frac{q(r)+1}{\alpha(r)}G'(r)h(r)dr
 \end{equation}
for $c<r_1<r_2.$ Since
 \begin{align*}
\int_c^{y_0}\frac{q(r)+1}{\alpha(r)}G'(r)h(r)dr &=
\int_{r_1}^{r_2}\frac{q(r)+1}{\alpha(r)}(r_2-r)G'(r)dr+\int_{c}^{r_1}(r_2-r_1)G'(r)\frac{q(r)+1}{\alpha(r)}dr\\
& =
\int_{r_1}^{r_2}\frac{q(t)+1}{\alpha(t)}G'(t)\int_t^{r_2}dsdt+(r_2-r_1)\int_c^{r_1}G'(r)\frac{q(t)+1}{\alpha(t)}dt\\
&=\int_{r_1}^{r_2}ds\int_{r_1}^{s}\frac{q(t)+1}{\alpha(t)}G'(t)dt+\int_{r_1}^{r_2}ds\int_c^{r_1}G'(t)\frac{q(t)+1}{\alpha(t)}dt\\
& = \int_{r_1}^{r_2}ds\int_{c}^{s}\frac{q(t)+1}{\alpha(t)}G'(t)dt.
 \end{align*}
Substituting this into (\ref{2aaa}), we obtain
$$
-\int_{r_1}^{r_2}G'(r)\frac{\beta(r)}{\alpha(r)}dr+G'(r_2)-G'(r_1)\geq
\int_{r_1}^{r_2}ds\int_{c}^{s}\frac{q(t)+1}{\alpha(t)}G'(t)dt
$$
for $c<r_1<r_2$. It can re-written as a differential equality :
\begin{equation}\label{felform}
 \LL^{-}G:=G''(r)-\frac{\beta(r)}{\alpha(r)}G'(r)=\int_c^{r}\frac{q(t)+1}{\alpha(t)}G'(t)dt
 +H'
\end{equation}
in distribution on $(c, y_0)$, where $H:(c,y_0)\to\rr$ is
nondecreasing and right continuous. Then $G'$ admits a right
continuous version. We shall assume $G'$ is itself right continuous
below.

Consider $\tilde
m'(r)=\exp\left(\int_c^r\frac{\beta(t)}{\alpha(t)}dt\right)$,
$s'(r)=\exp\left(-\int_c^r\frac{\beta(t)}{\alpha(t)}dt\right)$,
which are respectively the derivative of the scale and speed
function associated with $\LL^-$. Then $m'(r):=\frac {\tilde
m'(r)}{\alpha(r)}, s'(r)$  are respectively the derivative of the
speed and scale function associated with $\LL^{1,q}$. Hence we can
write (\ref{felform}) in the Feller's form,
$$
(G'/\tilde m ')'\ge s'\int_c^{\cdot}\frac{q(t)+1}{\alpha(t)}G'(t)dt.
$$
Let us prove now that $\exists r_0\in (c,y_0),\   G ' (r_0)>0.$
Indeed, if in contrary $G'(r)=0, \forall r>c$, then $u=0, dx-a.e.$
on $[\rho>c]$, i.e., outside of the compact set
$K=\rho^{-1}[x_0,c]$. Thus $u=0$ on $M$ by Lemma \ref{lem43}, a
contradiction with our assumption.

 The above inequality in distribution implies that for
$dr-a.e.\ r>r_0, r_0\in(x_0, y_0)$
\begin{align*}
\frac{dG}{d\tilde m }(r) & \geq
\frac{dG}{d\tilde m }(r_0)+\int_{r_0}^rs'(u)du\int_{r_0}^{u}\frac{q(t)+1}{\alpha(t)}G'(t)dt\\
& =\frac{dG}{d\tilde m
}(r_0)+\int_{r_0}^rs'(u)du\int_{r_0}^{u}(q(t)+1) m '(t)
\frac{dG}{d\tilde m }(t)dt.
\end{align*}
Using  the above inequality by induction as in the proof of
Proposition \ref{prop31} we get
$$
\frac{G'}{\tilde m '}(y)\geq C\sum_{n=0}^{+\infty}I_n^{q+1}(y)
$$
where $I_0^{q+1}=1, \ I_n^{q+1}(y)=\int_{r_0}^ys'(r)dr\int_{r_0}^r
(q(t)+1) m '(t) I_{n-1}^{q+1}(t)dt,\  C=\frac{dG}{d\tilde m
}(r_0)>0.$
 Using Co-area formula and our assumption, we obtain:
\begin{align*}
 \int_{\{r_0\leq\rho\}}|u|dx&\geq\int_{\{r_0\leq\rho\}}\frac{|u|\cdot|\nabla \rho|^2}{\alpha(\rho)}
 dx\\
 &=\int_{r_0}^{y_0}\frac{G'(r)}{\alpha(r)}dr= \int_{r_0}^{y_0} m '(r) \frac{dG}{d\tilde m } (r)dr\\
&\geq C\sum_{n=0}^{+\infty}\int_{r_0}^{y_0} m '(r) I_n^{q+1}(r)dr.
\end{align*}
Thus $\int_{\{r_0\leq\rho\}}|u|dx=\infty$ by our assumption that
$y_0$ is no entrance boundary for $\LL^{1,q}$. This is in
contradiction with the assumption that $u\in L^1(M, dx)$.
\end{proof}

With the same proof we have the two sides' version of Theorem
\ref{mainthm} :
\begin{thm}\label{twosides}
We suppose \bdes
 \item{\bf(1) } $\rho:\ M\longrightarrow(x_0, y_0)$ is surjective, where $-\infty\leq
x_0<y_0\leq +\infty$, such that $\rho^{-1}\left([x_1,\  x_2]\right)$
is compact subset for all $x_1<x_2$ in $(x_0, y_0)$, and there are
$c_1<c_2$ in $(x_0,y_0)$ such that $\rho$ is $C^2$-smooth, $|\nabla
\rho|>0$ on $[\rho<c_1]\cup [\rho>c_2]$ ;

\item{\bf (2)} there  exist $\alpha (r),\  \beta(r),\  q(r) \in L_{loc}^{\infty}\left((x_0,
y_0), dr\right)$, $q(r)\geq0, \ \alpha>0,\ 1/\alpha (r)\in
L^\infty_{loc}(x_0,y_0)$ and $c_1<c_2$ in $(x_0,\ y_0)$ such that
$dx-a.e. $ on $[\rho<c_1]\cup [\rho>c_2]$,
 \ndes \bequ\label{cor4.16} |\nabla \rho|^2\leq
\alpha(\rho),\ \text{$dx-a.e. $ on } \ [\rho<c_1]\cup
[\rho>c_2];\nequ
 \bequ\label{cor4.17} \LL \rho \geq|\nabla \rho|^2\frac{\beta(\rho)}{\alpha(\rho)},\
 \text{$dx-a.e. $ on } \
[\rho>c_2];\nequ \bequ\label{cor4.18} \LL \rho \leq|\nabla
\rho|^2\frac{\beta(\rho)}{\alpha(\rho)},\
 \text{$dx-a.e. $ on } \
[\rho<c_1];\nequ
  \bequ\label{cor4.19}V(x)\geq q(\rho (x)),\ \text{$dx-a.e. $ on } \ [\rho<c_1]\cup
[\rho>c_2];\nequ If $x_0,y_0$ are no entrance boundaries for
$\LL^{1, q}=\alpha(r)\frac{d^2}{dr^2}+\beta(r)\frac{d}{dr}-q(r)$,
then $\left(\LL^V, C_0^\infty(M)\right)$ is $L^{\infty}(M,
dx)$-unique.
\end{thm}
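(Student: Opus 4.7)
The plan is to reduce Theorem \ref{twosides} to two applications of the argument used for Theorem \ref{mainthm}, one at each end of the interval $(x_0,y_0)$, and then to glue the two ``vanishing at infinity'' statements together via Lemma \ref{lem43}. Suppose, by contradiction, that there is a non-zero $u\in L^1(M,dx)$ satisfying $\int_M u(\LL^V-1)f\,dx=0$ for all $f\in C_0^\infty(M)$. By Lemma \ref{kato}, $u$ has a $C^1$-smooth version and satisfies the Kato inequality (\ref{kato2}). The target is to prove that $u\equiv 0$ on $[\rho>c_2]$ and $u\equiv 0$ on $[\rho<c_1]$; once this is done, $u$ is supported in the compact set $K:=\rho^{-1}([c_1,c_2])$, and Lemma \ref{lem43} yields $u=0$ on $M$, a contradiction.

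For the right end I would rerun the proof of Theorem \ref{mainthm} word for word on the interval $(c_2,y_0)$, with test functions $f=h(\rho)$ where $h(r)=\min\{r_2-r_1,(r_2-r)^+\}$ and $c_2<r_1<r_2<y_0$. Assumptions (\ref{cor4.16}), (\ref{cor4.17}), (\ref{cor4.19}) play exactly the roles that (\ref{22ass2}), (\ref{2ass1}), (\ref{2ass2}) did in Theorem \ref{mainthm}. Setting $G_+(r):=\int_{\{c_2<\rho\le r\}}|\nabla\rho|^2|u|\,dx$ and using the Co-area formula, one obtains the same distributional Feller-form inequality
$$
(G_+'/\tilde m')'\ \ge\ s'(\cdot)\int_{c_2}^{\cdot}\frac{q(t)+1}{\alpha(t)}G_+'(t)\,dt
$$
on $(c_2,y_0)$. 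Proposition \ref{prop31}(a) then forces $G_+'(r)\ge C\sum_n I_n^{q+1}(r)$ whenever $G_+'(r_0)>0$ for some $r_0>c_2$, and the no-entrance hypothesis at $y_0$ for $\LL^{1,q}$ gives $\int_{\{\rho>r_0\}}|u|\,dx=+\infty$, contradicting $u\in L^1$. Hence $G_+'\equiv 0$, so $u=0$ on $[\rho>c_2]$.

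For the left end I would run the symmetric argument on $(x_0,c_1)$, using test functions $f=h(\rho)$ with $h(r)=\min\{r_2-r_1,(r-r_1)^+\}$ for $x_0<r_1<r_2<c_1$, and defining $G_-(r):=\int_{\{r\le\rho<c_1\}}|\nabla\rho|^2|u|\,dx$, a non-increasing function of $r$. The Divergence Theorem produces surface terms on $\{\rho=r_1\}$ and $\{\rho=r_2\}$ with opposite signs to those in the right-end computation, and the hypothesis (\ref{cor4.18}), which reverses (\ref{2ass1}), exactly compensates for this flip so that the net inequality has the same form as before but in the reversed variable. The cleanest way to avoid sign bookkeeping is to introduce $\sigma(x):=-\rho(x)$, work on $(-c_1,-x_0)$, and observe that (\ref{cor4.18}) becomes $\LL\sigma\ge|\nabla\sigma|^2\,\beta(-\sigma)/\alpha(-\sigma)$, reducing the situation exactly to the right-end setting, with Proposition \ref{prop31}(b) and the no-entrance hypothesis at $x_0$ producing the same $L^1$-contradiction unless $u=0$ on $[\rho<c_1]$.

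The main obstacle, as just mentioned, is the sign bookkeeping in the left-end argument: the outward normal on $\{\rho=r_1\}$ seen from $[\rho<c_1]$ points toward decreasing $\rho$, and the Kato inequality (\ref{kato2}) is sign-sensitive. I would write the left-end step via the variable change $\sigma=-\rho$ so that it is a direct citation of the right-end computation rather than a parallel rederivation. Once both ends are handled, the final gluing step using Lemma \ref{lem43} on the compact set $\rho^{-1}([c_1,c_2])$ is routine and completes the contradiction.
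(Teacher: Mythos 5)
Your proposal is correct and is exactly what the paper does: the paper proves Theorem \ref{twosides} only by the remark ``with the same proof,'' i.e., by rerunning the argument of Theorem \ref{mainthm} separately near $y_0$ (using (\ref{cor4.16}), (\ref{cor4.17}), (\ref{cor4.19})) and near $x_0$ (using (\ref{cor4.18})), and then invoking Lemma \ref{lem43} on the compact set $\rho^{-1}([c_1,c_2])$ --- precisely your plan. One small correction to your reflection step: under $\sigma=-\rho$, (\ref{cor4.18}) gives $\LL\sigma\ge|\nabla\sigma|^2\bigl(-\beta(-\sigma)\bigr)/\alpha(-\sigma)$, so the one-dimensional comparison operator on $(-c_1,-x_0)$ has drift $-\beta(-s)$ (not $\beta(-s)$); with that sign its right endpoint $-x_0$ is no entrance exactly when $x_0$ is no entrance for $\LL^{1,q}$, since the $I_n$ of the reflected operator coincide with the $J_n$ of $\LL^{1,q}$.
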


\bcor Suppose that M is a Cartan-Hadamard manifold with dimension
$d\ge 2$ (i.e. complete, simply connected with non-positive
sectional curvature). Let $d(x)$ be the distance from some fixed
point $o$ to $x$. Then

\bdes
\item{(a)} $\Delta$ is $L^\infty(M,dx)$-unique. In particular the
$L^1$-Liouville property holds : every $dx$-integrable
($\Delta$-)harmonic function is constant.

\item{(b)} Assume that  $b $ verifies $b (x)\cdot \nabla d(x)\ge
-L [1+d^2(x)], \ x\ne o$ for some constant $L>0$. Then $\LL^V=\Delta
+b \cdot \nabla-V$ is $L^\infty(M,dx)$-unique. In particular the
$L^1$-Liouville property in Theorem  \ref{thm22} and Corollary
\ref{cor21}  holds true.

\item{(c)}  Assume that  $b $ verifies $b (x)\cdot \nabla d(x)\ge
-L [1+d^\alpha(x)]$ for some constants $L>0$ and $\alpha>2$. If
$V(x)\ge cd(x)^{\alpha-2}$ with $c>L \alpha (\alpha-2)$, then
$\LL^V=\Delta +b  \cdot \nabla-V$ is $L^\infty(M,dx)$-unique. In
particular the $L^1$-Liouville property  in Theorem \ref{thm22}
holds.

 \ndes
 \ncor
 \bprf {\bf (a)} The $L^\infty$-uniqueness of $\Delta$ is a particular case of part
 (b). Then the $L^1$-Liouville theorem follows  from Example
 \ref{exa21}.

 {\bf (b)} Recall that on the Cartan-Hadamard manifold, the exponential map $\exp : T_oM\to M$
is a diffeomorphism. The distance function $d(x)$ is
$C^\infty$-smooth on $M\backslash \{o\}$, and the  Laplacian
comparison theorem says that (\cite{Chee, SY})
$$
\Delta d(x)\ge \frac{d-1}{d(x)}, \ x\ne o.
$$
Hence the assumption {\bf (A)} holds   with $\rho(x)=d(x)$,
$[x_0,y_0)=\rr^+$, $\alpha(r)=1$, $q(r)=0$ and $\beta(r)=-(L+1) r^2$
(for some point $c\in \rr^+$ large enough in {\bf (A)}). By Example
\ref{exa33}, $+\infty$ is no entrance boundary for $\LL^{1,q}$, thus
$\LL^V$ is $L^\infty$-unique by Theorem \ref{mainthm}.

 {\bf (c)} Take $\rho(x)=d(x)$ as above and $\LL^{1,q}$ as follows : $[x_0,y_0)=\rr^+$,
 $\alpha(r)=1$, $q(r)=c r^\alpha$ and $\beta(r)=-(L+\vep)
 r^\alpha$, where $\vep>0$ is small enough so that $c>(L+\vep)\alpha(\alpha-2)$.
 The assumption {\bf (A)} is satisfied. Again by  Example
 \ref{exa33}, $+\infty$ is no entrance boundary for $\LL^{1,q}$, therefore $\LL^V$ is
$L^\infty$-unique by Theorem \ref{mainthm}.
 \nprf

 \bexam\label{exa41}{\bf (the first example of \cite{LS})} {\rm Let $M$ be a
compact surface with arbitrary genus. Assume the metric on $M$
around some point $o\in M$ is flat. Hence locally around $o$ we can
write the metric in polar coordinates as
$$ds^2_0=dr^2+r^2d\theta ^2$$
we choose the new metric to be
$$
ds^2=\varrho^2ds_0^2.
$$
Choose $\varrho$ to be arbitrary outside a neighborhood of $o$ (say
$r\ge\delta$), and for $r\in(0,\delta]$,
\begin{equation}
 \varrho(\theta,r)=\varrho(r)= r^{-1}(-\log r)^{-1}\left(\log(-\log
 r)\right)^{-\alpha},
 \end{equation}where $0<\delta<e^{-2}$ and $0<\alpha\leq1$ (It is assumed in \cite{LS} that $\frac12<\alpha\le 1$).
$(M\backslash \{o\}, ds^2)$ is (metrically) complete, stochastically
complete and its  volume is finite. The Green's function on $(M,
ds_0^2)$ (with the pole at $o$) $G(o,x)=f(x)$ is a positive harmonic
on $M\setminus\{o\}$ w.r.t. $ds_0^2$, then w.r.t. $ds^2$. Let
$\Delta, \nabla, |\cdot|$ be respectively the Laplacian operator,
the gradient and the Riemannian norm in the metric $ds^2$. Note
$\Delta r=\frac{1}{r\varrho(r)^2},\ |\nabla
r|=\frac{1}{\varrho(r)}$. Consider the Sturm-Liouville operator
$\dps \LL^1:
=\frac{1}{\varrho^2(r)}\frac{d^2}{d^2r}+\frac{1}{r\varrho^2(r)}\frac{d}{dr}$,
the derivative of speed function and that of scale function of $\LL$
are respectively
$m'(r)=\varrho^2(r)\exp\left(\int_1^r\frac{1}{t}dt\right)=\varrho^2(r)r,\
s'(r)=\exp\left(-\int_1^r\frac{1}{t}dt\right)=\frac{1}{r}.$ \bdes
\item{(i). } Let $\alpha\in (0, \frac12]$. Since \begin{align*}
\int_0^\delta m'(r)dr\int_r^\delta s'(t)dt&=\int_0^\delta \varrho^2(r)r\log(\frac{1}{r})dr\\
&=\int_0^\delta \frac{1}{r\log(\frac{1}{r})(\log\log(\frac{1}{r}))^{2\alpha}}dr\\
&=+\infty,
\end{align*}
 it follows that $0$ is no entrance boundary for $\LL^1$. By Theorem
\ref{mainthm}, $\left(\Delta, C_0^\infty(M\setminus\{o\})\right)$ is
$L^{\infty}(M\backslash \{o\})$-unique. Then the $L^1$-Liouville
property holds true on $(M\backslash \{o\}, ds^2)$ by Corollary
\ref{cor21}.

\item{(ii). }If $\alpha\in (\frac12,1]$. The Green function
$f(x)=G(o,x)$ with respect to the old metric  $ds_0^2$ is
$\Delta$-harmonic and $dx$-integrable as observed in \cite{LS}.
$\Delta$ can not be $L^\infty$-unique on $M\backslash \{o\}$ by
Example
 \ref{exa21} (or Corollary \ref{cor21}).  On the other hand, we have
\begin{align*}
\int_0^1m'(r)dr\int_r^1s'(t)dt&=\int_0^1\varrho^2(r)r\log(\frac{1}{r})dr<+\infty.
\end{align*}
That shows the sharpness of Theorem \ref{mainthm}.
 \ndes

 }
\nexam

 \bexam {\rm
 Let $D$ be the unit open ball centered at the origin of $\rr^d(d\ge 2)$. Let
$\LL^V:=\Delta -V(x)$ be defined on $C_0^\infty(D)$. Let
$\rho(x)=|x|$ ($|x|$ denotes the Euclidian metric). For this
example, $|\nabla\rho(x)|=1$ and $\Delta \rho(x)=\frac{d-1}{r}\ge 0
$ where $r=r(x)=|x|$. If $V(x)\ge \frac{c}{\left(1-|x|\right)^2}$,
where $c$ is a constant such that $c\ge 2$. The assumption {\bf (A)}
is satisfied for $\LL^{1,q}:=\frac{d^2}{dr^2}-\frac{c}{(1-r)^2}$. By
Example \ref{exam1}, $1$ is no entrance boundary for $\LL^{1,q}$,
then $(\LL^V,C_0^\infty(D))$ is $L^\infty$-unique by Theorem
\ref{mainthm}.

} \nexam

\bexam{\rm
 Let us consider $\LL^V:=\Delta-V$ defined on $C_0^\infty(D)$ where $D:=\rr^d\setminus{0}(d\ge 2).$
 Let $\rho(x)=r(x):=|x|$, then $|\nabla \rho(x)|=1,\Delta
 \rho(x)=\frac{d-1}{r(x)}$. If $V(x)\ge \frac{c}{r^2(x)}$ for $x$ close to $0$ (say $|x|<\delta$), where the constant $c$ satisfies
$$
c\ge d^2 - (d-1)^2+1=2d
$$
since $0$ and $+\infty$ is no entrance boundary
 for $\dps \LL^{1,q}:=\frac{d^2}{dr^2}+\frac{d-1}{r}\frac{d}{dr}-\frac{c}{r^2} 1_{(0,\delta)}(r)$ by Example
 \ref{exam1},
 $(\LL^V,C_0^\infty(D))$
 is $L^\infty$-unique by Theorem \ref{mainthm}.

 In contrary if
 $V=c/|x|^2$ with $0\le c<2d$, as $\LL^{1,q}$ is not
 $L^\infty(m)$-unique (again by Example
 \ref{exam1}), there is some $h\in L^1((0,+\infty), m)\bigcap C^1(0,+\infty)$ such
 that $h'$ is absolutely continuous and
 $\frac{d^2h}{dr^2}+\frac{d-1}{r}\frac{dh}{dr}-\frac{ch}{r^2}=h$ (such $h$ is indeed $C^\infty$-smooth).
 Let $\tilde h(x)=h(r(x))$, we see readily that $\tilde h\in L^1(D,dx)$ and
 $\Delta \tilde h-V\tilde h=\tilde h$ over $D=\rr^d\backslash
 \{0\}$.
 Thus $\Delta-V$ defined on $C_0^\infty(D)$ is not
 $L^\infty(D,dx)$-unique.
} \nexam

\end{document}